\newtheorem{theorem}{Theorem}[section]
\newtheorem{lemma}[theorem]{Lemma}
\newtheorem{cor}[theorem]{Corollary}
\newtheorem{prop}[theorem]{Proposition}
\theoremstyle{definition}
\newtheorem{defn}[theorem]{Definition}
\numberwithin{equation}{theorem}
\def\limproj{\mathop{\oalign{{\rm lim}\cr
\hidewidth$\longleftarrow$\hidewidth\cr}}}
\def\ZZ{{\mathbb Z}}
\def\QQ{{\mathbb{Q}}}
\def\NN{{\mathbb{N}}}
\def\Qp{{\mathbb{Q}}_p}
\def\Cp{{\mathbb{C}}_p}
\def\Zp{{\mathbb{Z}}_p}
\def\OO{\mathcal{O}}
\def\MM{\mathfrak{m}}
\def\D{\mathrm{D}}
\def\Gal{\mathrm{Gal}}
\def\rig{\mathrm{rig}}
\def\dR{\mathrm{dR}}
\def\dif{\mathrm{dif}}
\def\Sp{\mathrm{Sp}}
\def\sp{\mathrm{sp}}
\def\ho{\widehat{\otimes}}
\def\bcris{\mathbf{B}_{\mathrm{crys}}}
\def\bdR{\mathbf{B}_{\rm dR}}
\def\m{(\varphi,\Gamma)}
\def\dcris{\mathrm{D}_{\mathrm{crys}}}
\def\ddR{\mathrm{D}_{\mathrm{dR}}}
\def\ra{\rightarrow}
\newcommand{\btdag}[1]{\widetilde{\mathbf{B}}^{\dagger #1}}
\newcommand{\bdag}[1]{\mathbf{B}^{\dagger #1}}
\newcommand{\brig}[2]{\mathbf{B}^{\dagger #1}_{\mathrm{rig} #2}}
\title{On Properness of the Eigencurve}
\author{%
Hansheng Diao\\
       Department of Mathematics\\
       Harvard University\\
       hansheng@math.harvard.edu
\and
       Ruochuan Liu\\
       Beijing International Center\\
        for Mathematical Research\\
       Peking University\\
       liuruochuan@math.pku.edu.cn}
\begin{document}
\maketitle
\begin{abstract}
We prove that the Coleman-Mazur eigencurve is proper (over the weight space) at a large class of points.
\end{abstract}

%%%%%%%%%%%%%%%%%%%
%%%%%                                %%%%%
%%%%%      Chapter 1        %%%%%
%%%%%                                %%%%%
%%%%%%%%%%%%%%%%%%%

\section{Introduction}
The $p$-adic eigencurve $\mathcal{C}$ is originally constructed by Coleman and Mazur in \cite{CM98}. It is a rigid analytic curve parameterizing finite slope overconvergent $p$-adic eigenforms of tame level 1. The construction is later generalized to any tame level $N$ by Buzzard in \cite{Buz07}. In the past decade, many efforts have been made towards understanding the geometry of the eigencurve (e.g., \cite{BuCa05}, \cite{BK05}, \cite{BuCa06}, \cite{Cal08}, and \cite{Bel12}). However, one fundamental question remains open. In \cite{CM98}, Coleman and Mazur ask the following question:  does there exist a $p$-adic family of finite slope overconvergent eigenforms, parameterized by a punctured disk, that converges to an overconvergent eigenform at the puncture which is of infinite slope?

In other words, adopting the formulation of Buzzard and Calegari (\cite{BuCa05}), this is to ask whether the projection map $\pi:\mathcal{C}\rightarrow \mathcal{W}$ satisfies the \emph{valuative criterion for properness}. In this sense, the properness of the eigencurve is proved in the case of $p=2, N=1$ in \cite{BuCa05} and is proved at the integral weights (for any $N$) in \cite{Cal08}. We want to point out that the map $\pi$ is actually not proper in the sense of rigid analytic geometry as the eigencurve $\mathcal{C}$ is of infinite degree over the weight space $\mathcal{W}$.

In this paper, we will prove that the eigencurve is proper at a large class of points.

To state our main result, we first introduce the following notion: if $\lambda\in\Zp$ and $m$ is a positive integer, define $\lambda^{\{m\}}$ to be the unique integer in $\{0, 1,\dots,p^{m}-1\}$ congruent to $\lambda$ modulo $p^m$.
For $\lambda\in\overline{\mathbb{Z}}_p$, we say that $\lambda$ satisfies condition $(*)$ if it satisfies one of the following three disjoint conditions:
\begin{enumerate}
\item[(1)]$\lambda\notin \mathbb{Z}_p$;
\item[(2)]$\lambda\in\mathbb{Z}_p\setminus \mathbb{N}$ and $\lim_{m\ra\infty} \lambda^{\{m\}}/m=\infty$;
\item[(3)]$\lambda\in\mathbb{N}$.
\end{enumerate}
Note that the condition $(*)$ is slightly stronger than being $p$-adic non-Liouville numbers (see \cite[Definition 13.1.2]{Ke07} for the definition of $p$-adic Liouville numbers). In fact, by \cite[Proposition 13.1.4]{Ke07}, we see that for $\lambda\in\mathbb{Z}_p$, it satisfies $(*)$ if and only if it is either an integer or of type 1 in the sense of $p$-adic differential equations.
It is also clear that the subset of numbers which are excluded by $(*)$ has measure $0$ in $\overline{\mathbb{Z}}_p$.
%\[
%\{\lambda\in\overline{\mathbb{Z}}_p|\lambda\hspace{1mm}\text{does not satisfy}\hspace{1mm}(*)\}
%\]

Let $\Sigma$ be the finite set of places of $\mathbb{Q}$ consisting of the infinite place and the primes dividing $pN$. For a $p$-modular representation $\overline{V}$ of $G_{\mathbb{Q}, \Sigma}$, let $\widehat{R}_{\overline{V}}$ be the universal deformation ring of the pseudo-representation associated to $\overline{V}$. Let $\widehat{R}=\prod \widehat{R}_{\overline{V}}$ where $\overline{V}$ runs through all (finitely many) $p$-modular representations.
One can regard the eigencurve $\mathcal{C}$ of tame level $N$ as an analytic subspace of
$\Sp(\widehat{R}[1/p])\times \mathbb{G}_m$. Let $\widehat{R}^\circ=\prod \widehat{R}_{\overline{V}}$ where $\overline{V}$ runs through all irreducible $p$-modular representations whose restrictions on $G_{\mathbb{Q}_p}$ are not isomorphic to twists of $\left(
\begin{smallmatrix}
 1&* \\
 0&\overline{\chi}
\end{smallmatrix}
\right)$. Here $\chi$ denotes the cyclotomic character.
%whose $\mathbb{C}_p$-points correspond to finite slope overconvergent eigenforms of tame level $N$.
Set $\mathcal{C}^\circ=\mathcal{C}\cap(\Sp(\widehat{R}^\circ[1/p])\times \mathbb{G}_m)$.

The main result of this paper is as follows.
\begin{theorem}\label{main}
Let $D$ be the closed unit disk over some finite extension $L$ over $\Qp$, and let $D^{\ast}$ be the punctured disk with the origin removed. Suppose $f: D^{\ast}\rightarrow \mathcal{C}^\circ$ is a morphism of rigid analytic spaces such that $\pi\circ f$ extends to $D$.
Moreover, suppose that the Hodge-Tate weight of $(\pi\circ f)(0)$ satisfies $(*)$.
%of conductor dividing $N$.
Then the map $f$ uniquely extends to a map $\widetilde{f}: D\rightarrow \mathcal{C}^\circ$ compatible with $\pi\circ f$.
\end{theorem}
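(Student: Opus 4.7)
My plan is to factor the extension problem for $f$ through the two projections of $\mathcal{C}^\circ \hookrightarrow \Sp(\widehat{R}^\circ[1/p]) \times \mathbb{G}_m$: extending the associated family of Galois pseudo-representations, and extending the $U_p$-eigenvalue $a_p \in \mathcal{O}(D^{\ast})^\times$. Since a point of $\mathcal{C}^\circ$ is determined by the pair (pseudo-representation, $a_p$) with $a_p \neq 0$, producing both extensions with $a_p(0)\neq 0$ yields the desired $\widetilde{f}$, and uniqueness follows from the separatedness of $\mathcal{C}^\circ$. The main obstacle --- and the reason the properness question has remained open since Coleman-Mazur --- is to rule out $v_p(a_p(z))\to\infty$ as $z\to 0$.

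For the pseudo-representation side, the composition $D^{\ast}\to\Sp(\widehat{R}^\circ[1/p])$ corresponds to a continuous ring homomorphism $\widehat{R}^\circ\to\mathcal{O}(D^{\ast})$ which, by completeness of $\widehat{R}^\circ$ at its maximal ideal and continuity, automatically lands in the power-bounded subring $\mathcal{O}(D^{\ast})^\circ$. Bounded analytic functions on the annulus extend to $D$ by the Riemann extension theorem, so the homomorphism extends to $\widehat{R}^\circ\to\mathcal{O}(D)^\circ$, producing a morphism $D\to\Sp(\widehat{R}^\circ[1/p])$. By the residual irreducibility built into the definition of $\widehat{R}^\circ$, this lifts to a genuine family $V$ of Galois representations over $D$ extending the one pulled back from $\mathcal{C}^\circ$.

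Next, I would pass to the associated relative $(\varphi,\Gamma)$-module $D_{\mathrm{rig}}(V|_{G_{\Qp}})$ over the relative Robba ring $\mathcal{R}_D$. By the theory of trianguline families (Kisin, Colmez, Bellaïche--Chenevier), the restriction to $D^{\ast}$ carries a canonical triangulation
\[
0 \longrightarrow \mathcal{R}_{D^{\ast}}(\delta_1) \longrightarrow D_{\mathrm{rig}}(V|_{G_{\Qp}})|_{D^{\ast}} \longrightarrow \mathcal{R}_{D^{\ast}}(\delta_2) \longrightarrow 0
\]
with $\delta_1(p)=a_p$ and $\delta_1\delta_2$ tracking the weight character $\pi\circ f$ on all of $D$. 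The exclusion of residual twists of $\left(\begin{smallmatrix} 1 & * \\ 0 & \overline{\chi} \end{smallmatrix}\right)$ ensures that $\delta_1$ and $\delta_2$ do not collide residually at $0$, keeping the triangulation rigid and canonically determined near the puncture.

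The heart of the argument, and the step I expect to be the main obstacle, is extending this triangulation across the origin with $\delta_1(p)$ still non-vanishing at $0$. Here the condition $(*)$ on the Hodge-Tate weight at $0$ enters crucially: it guarantees that the associated character is either integral or ``of type 1'' in the sense of $p$-adic differential equations, so the $(\varphi,\Gamma)$-cohomology groups that control both the extension class of the triangulation and the obstructions to spreading out the sub-object $\mathcal{R}_{D^{\ast}}(\delta_1)$ across $0$ are finite-dimensional and commute with specialization. Combining this finiteness with the rigidity provided by the residual hypothesis should let me extend the sub-$(\varphi,\Gamma)$-module to a saturated $\mathcal{R}_D(\widetilde{\delta}_1)\subset D_{\mathrm{rig}}(V|_{G_{\Qp}})$ with $\widetilde{\delta}_1(p)\in\mathcal{O}(D)^\times$; the pair $(V(0),\widetilde{\delta}_1(p)(0))$ then defines $\widetilde{f}(0)\in\mathcal{C}^\circ$.
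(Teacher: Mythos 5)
Your outline correctly identifies the skeleton shared with the paper: extend the family of Galois representations across the puncture via boundedness/compactness (this is exactly what the paper does); reduce the extension of $f$ to showing $V_0$ has a nonzero crystalline period with Frobenius eigenvalue $\alpha(0)$, which in particular forces $\alpha(0)\neq 0$; and identify the vanishing $\alpha(0)=0$ as the genuine obstruction. But two pieces that carry the real content of the paper's proof are missing or misdiagnosed.

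First, and most importantly, your account of how condition $(*)$ enters is not right. You claim $(*)$ ``guarantees that \dots the $(\varphi,\Gamma)$-cohomology groups \dots are finite-dimensional and commute with specialization.'' Finiteness of $(\varphi,\Gamma)$-cohomology for rank-one $(\varphi,\Gamma)$-modules over the relative Robba ring holds unconditionally (Colmez, Kedlaya--Pottharst--Xiao, Liu) and has nothing to do with Liouville conditions on the weight. In the paper, $(*)$ enters at a completely different, and rather more concrete, point: it is used in Lemma \ref{sequence} to manufacture a sequence of integers $k_0<k_1<\cdots$ satisfying two delicately balanced inequalities, namely $k_i-1> c_0\,(2+\log_p k_{i+1})$ and the absence of integers in $[k_i,k_{i+1}]$ congruent to $\kappa(0)$ modulo $p^{\lceil\log_p k_{i+1}\rceil}$, where $c_0$ bounds the growth of $v_p(\alpha)$ near the origin. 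The whole mechanism is a glueing argument on the disk (Proposition \ref{prop:connection-map}): on a small disk around $0$ one kills the obstruction to lifting mod $t^{k_{i+1}}$ because the weights never hit the bad integers in $[k_i,k_{i+1}-1]$; on the complementary annulus one kills it because $k_i$ exceeds $\log_p|\alpha^{-1}|_{\mathrm{sp}}$ and the relevant modules are all identified with $\D^\dagger_\rig(V_R)^{\varphi=\alpha,\Gamma=1}$ via the finite slope subspace machinery. Your proposal says the triangulation ``should'' extend because of finiteness plus residual rigidity, but that sentence is where the entire difficulty lives, and the finiteness you invoke neither fails without $(*)$ nor suffices to produce the extension.

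Second, knowing $\alpha(0)\neq 0$ and $V_0$ trianguline does not by itself place the limit point on $\mathcal{C}^\circ$. The paper closes this gap with Emerton's local--global compatibility: $V_0$ is promodular because it lies in the Zariski closure of classical points, and then \cite[Corollary 1.2.2]{E11} says a promodular trianguline representation with the prescribed crystalline period and Frobenius eigenvalue corresponds to an overconvergent finite-slope eigenform. You tacitly replace this by ``a point of $\mathcal{C}^\circ$ is determined by the pair (pseudo-representation, $a_p$) with $a_p\neq 0$,'' which is not the same as saying every such pair arising as a limit lies on $\mathcal{C}^\circ$. You should either invoke the Emerton-style argument explicitly or make a careful closedness argument inside $\Sp(\widehat{R}^\circ[1/p])\times\mathbb{G}_m$.
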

%In particular the theorem implies that $\mathcal{C}^\circ$ is proper at the points whose weights are not in $\mathbb{Z}_p$.

Our approach is Galois theoretical. By \cite{CM98}, the family of overconvergent eigenforms on the eigencurve $\mathcal{C}$ give rise to a family of $G_{\mathbb{Q}}$-representations on $\mathcal{C}^\circ$. Pulling back along $f$, we obtain a family of $G_{\mathbb{Q}}$-representations $V_{D^{\ast}}$ on the punctured disk $D^{\ast}$. A compactness argument shows that $V_{D^{\ast}}$ naturally extends to a family of $G_{\mathbb{Q}}$-representations $V_D$ on the entire disk. It is not difficult to see that the specialization $V_0$ of $V_D$ at the puncture is promodular in the sense of \cite{E11}. Thanks to [Corollary 1.2.2, \emph{loc.cit.}], it reduces to show that $V_0$ is trianguline.

The remaining work is purely local. Let $\alpha\in\mathcal{O}(D^{\ast})^{\times}$ be the pullback of the $U_p$-eigenvalue. It is straightforward to see that $\alpha$ extends to an analytic function on the entire disk. We will show that $V_0$ admits nonzero crystalline periods with Frobenius eigenvalue $\alpha(0)$. (Thus $\alpha(0)$ is nonzero!)
%Note that $V_D$ admits Hodge-Tate periods with constant Hodge-Tate weight $0$.
%Our idea is to lift the Hodge-Tate periods to crystalline periods on the entire disk. It then follows that the crystalline periods have Frobenius eigenvalue $\alpha$.
%We then conclude that the specialization of $V_D$ at the puncture is trianguline, and thus comes from an overconvergent cuspidal eigenform of finite slope by the works of Emerton \cite{E11} and Hu-Tan \cite{HT13}; this is the only place where we need to assume that the residual representation is irreducible.

The first step is to construct the de Rham periods. In fact, we will prove the following key proposition.

\begin{prop}
Let $V_D$ be the family of $p$-adic representations as above. Suppose that the Hodge-Tate weight of
$(\pi\circ f)(0)$ satisfies $(*)$. After shrinking the disk if necessary, there exists an increasing sequence of positive integers $k_0<k_1<\cdots$ such that
\begin{enumerate}
\item[(i)] The module $(\D^+_{\dif}(V_D)/(t^{k_i}))^{\Gamma}$ is free of rank $1$ for all $i\geq 0$.
\item[(ii)] The natural map
\[
(\D^+_{\dif}(V_D)/(t^{k_{i+1}}))^{\Gamma}\rightarrow(\D^+_{\dif}(V_D)/(t^{k_i}))^{\Gamma}
\]
is an isomorphism for all $i\geq 0$.
Consequently,
\[
\D^+_{\dR}(V_S)=\D^+_{\dif}(V_S)^{\Gamma}=\varprojlim (\D^+_{\dif}(V_S)/(t^{k_i}))^{\Gamma}
\]
is free of rank $1$.
\end{enumerate}
\end{prop}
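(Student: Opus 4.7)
The plan is to compute $(\D^+_{\dif}(V_D)/(t^{k}))^\Gamma$ inductively in $k$ using the $t$-adic filtration, controlled by Sen theory in families. After possibly shrinking $D$, one arranges that $\D_{\Sen}(V_D) := \D^+_{\dif}(V_D)/t\D^+_{\dif}(V_D)$ is a free $\widehat{\OO(D)}_\infty$-module of rank $2$ whose Sen operator has eigenvalues $0$ and $\lambda(z)$, with $\lambda \in \OO(D)$ the analytic interpolation of the Hodge-Tate weight and $\lambda(0)$ satisfying $(*)$. Under the $t^j$-twist, the graded piece $\mathrm{gr}^j := t^j\D^+_{\dif}(V_D)/t^{j+1}\D^+_{\dif}(V_D)$ has Sen eigenvalues $j$ and $j + \lambda(z)$.

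The short exact sequences
\[
0 \to \mathrm{gr}^k \to \D^+_{\dif}(V_D)/t^{k+1} \to \D^+_{\dif}(V_D)/t^k \to 0
\]
and their long exact continuous $\Gamma$-cohomology sequences reduce the problem to controlling $H^0$ and $H^1$ of each $\mathrm{gr}^k$. A standard computation shows that, for a rank-$1$ Sen $\widehat{\OO(D)}_\infty$-module of weight $\kappa \in \OO(D)$, after passing to a sufficiently small open subgroup $\Gamma_1 \subseteq \Gamma$ with $\chi(\Gamma_1) \subseteq 1+p^n\Zp$, the operator $\gamma-1$ acts as the scalar $\chi(\gamma)^{\kappa(z)} - 1$; both $H^0$ and $H^1$ then vanish precisely when this scalar is a unit in $\OO(D)$, equivalently when $\kappa$ is. The eigenvalue $j$ is a nonzero integer, hence a unit, for $j \geq 1$; the only potential obstruction is therefore the factor $j + \lambda(z)$.

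Condition $(*)$ governs the choice of $k_i$. In case (1), $\lambda(0) \notin \Zp$ gives a uniform lower bound $|\lambda(0)+k|_p \geq d(\lambda(0),\Zp) > 0$ over $k \in \ZZ$, so after shrinking $D$ within that bound every $\lambda(z)+k$ is a unit and any strictly increasing sequence $k_i$ works. In case (3), $\lambda(0) \in \NN$ yields $\lambda(0)+k \geq 1$ for $k \geq 1$, so after shrinking every $\lambda(z)+k$ is a unit and again any increasing sequence suffices. In case (2), the non-Liouville hypothesis $\lambda^{\{m\}}/m \to \infty$ provides quantitative control over the "bad" indices $k$ (those with $v_p(\lambda(0)+k)$ large), allowing extraction of an infinite increasing sequence $k_i$ coordinated with the shrinkage exponent $N$ of $D$ so that $j + \lambda(z)$ remains a unit in the Tate algebra $\OO(D)$ for all $j$ in each interval $(k_i, k_{i+1}]$.

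The main obstacle will be case (2): the bad indices form an arithmetic progression modulo $p^N$, and the non-Liouville growth of $\lambda^{\{m\}}$ must be leveraged to thread the sequence $k_i$ through the gaps while ensuring that every transition between consecutive chosen levels yields an isomorphism on $\Gamma$-invariants, which is the most delicate quantitative matching in the argument. Once such a sequence is in place, condition (ii) follows from the iterated vanishing of $H^0$ and $H^1$ on the intermediate graded pieces; condition (i) follows because at the base level $k_0$ only the Sen weight-$0$ line in $\D_{\Sen}(V_D)$ contributes a $\Gamma$-invariant, yielding a free rank-$1$ $\OO(D)$-module, and this freeness propagates through the isomorphisms of (ii).
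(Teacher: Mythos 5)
Your proposal hinges on showing that, after a single shrinking of $D$, every graded piece $\mathrm{gr}^j = t^j\D^+_{\dif}(V_D)/t^{j+1}\D^+_{\dif}(V_D)$ with $j$ in the gap between consecutive $k_i$'s has vanishing $\Gamma$-cohomology, so that the connecting maps are isomorphisms purely by the long exact sequence. This cannot work in cases (2) and (3). Once $\kappa(0)\in\Zp$, the image of $\kappa$ on any fixed shrunk disk of radius $p^{-N}$ is a small disk of some radius $p^{-m}$ around $\kappa(0)$, and the ``bad'' integers $j$ (those with $\kappa(x)=j$ for some $x\in D$) are exactly the nonnegative integers $\equiv\kappa(0)\pmod{p^m}$ --- an infinite arithmetic progression. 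Since the union $\bigcup_i[k_i,k_{i+1})$ covers $[k_0,\infty)$, infinitely many of these bad $j$ must eventually land inside some gap, and the corresponding $\mathrm{gr}^j$ does \emph{not} have vanishing $H^0$ and $H^1$. Your explicit claims for case (3) (``$\lambda(0)\in\NN$ yields $\lambda(0)+k\geq 1$ for $k\geq 1$, so after shrinking every $\lambda(z)+k$ is a unit'') and for case (2) (that the $k_i$ can be chosen so that all $j+\lambda(z)$ for $j\in(k_i,k_{i+1}]$ remain units on a single fixed disk) are false for this reason: being positive as a natural number does not prevent $\kappa(0)-j$ from being highly divisible by $p$, and you cannot thread an increasing sequence through an infinite set of obstructions covering all residues of $[k_0,\infty)$. (Case (1), $\kappa(0)\notin\Zp$, genuinely has no bad integers after shrinking, so there your argument is fine.)

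The missing idea is the glueing argument with the annulus, which is the heart of the paper's proof and of the whole $(*)$ condition. The paper does \emph{not} prove the intermediate vanishing on the entire disk $D=M(S_M)$. Instead, for each $i$ it covers $M(S_M)$ by a \emph{shrinking} disk $M(S_{\lceil\log_p k_{i+1}\rceil})$ and a complementary annulus $M(S_{M,\lfloor(k_i-1)/c_0\rfloor})$. On the small disk, condition (b) of the chosen sequence $k_i$ (which is exactly what the growth hypothesis $\lambda^{\{m\}}/m\to\infty$ buys you) guarantees no weight in $[k_i,k_{i+1}-1]$ occurs, so the cohomological vanishing applies there. On the annulus, by contrast, bad weights may well occur --- but there a completely different mechanism takes over: since $k_i$ exceeds $\log_p|\alpha^{-1}|_{\mathrm{sp}}$ on the annulus, both $(\D^+_{\dif}/(t^{k_i}))^\Gamma$ and $(\D^+_{\dif}/(t^{k_{i+1}}))^\Gamma$ are identified with $\D^\dag_\rig(V_R)^{\varphi=\alpha,\Gamma=1}$ via the finite slope subspace theorem, and the transition map becomes an isomorphism trivially, bypassing the graded-piece obstruction entirely. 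Your approach never invokes $\alpha$, the finite slope subspace, or $\D^\dag_\rig$ at all, which is why it cannot get past the annulus. (A secondary, but real, issue: your reduction to a scalar action $\chi(\gamma)^{\kappa(z)}-1$ on rank-$1$ pieces presupposes that the Sen module splits into eigenspaces over $\OO(D)$; at points where the Sen operator is non-semisimple or where eigenvalues collide, that splitting does not exist, and one must work with a filtration rather than a direct sum --- but even granting that, the main obstruction above stands.)
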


\bigskip

To prove this proposition, we make use of the results of the second author \cite{L12}.  We first show that the \textit{finite slope subspace} of $D^*$ with respect to the pair $(D^\ast,\alpha)$ is $D^*$ itself. This allows us to compare $\D^{\dagger}_{\rig}(V_{D^{\ast}})^{\varphi=\alpha, \Gamma=1}$ with the coherent modules  $(\D^+_{\dif}(V_{D^{\ast}})/(t^k))^{\Gamma}$. More precisely, for any affinoid subdomain $M(R)$ of $D^{\ast}$
and $k>\log_p|\alpha^{-1}|_{\mathrm{sp}}$ ($|\cdot|_{\mathrm{sp}}$ denotes the spectral norm on $M(R)$), the natural map
\[
\D^{\dagger}_{\rig}(V_{R})^{\varphi=\alpha, \Gamma=1}\ra (\D^+_{\dif}(V_R)/(t^k))^{\Gamma}
\]
is an isomorphism.
On the other hand, one can show that the module $\D^{\dagger}_{\rig}(V_{D^{\ast}})^{\varphi=\alpha, \Gamma=1}$ is locally free of rank 1 on the punctured disk. Hence so is $(\D^+_{\dif}(V_{R})/(t^k))^{\Gamma}$, and it follows that
the natural map
\[
(\D^+_{\dif}(V_R)/(t^{k+1}))^{\Gamma}\rightarrow(\D^+_{\dif}(V_R)/(t^k))^{\Gamma}
\]
is an isomorphism.

In order to pass to the puncture, we use a glueing argument. Note that the obstruction of lifting the elements of $(\D^+_{\dif}(V_D)/(t^{k_i}))^{\Gamma}$ to $(\D^+_{\dif}(V_D)/(t^{k_{i+1}}))^{\Gamma}$ lies at those points who have integral weights $k\in [k_i,\dots,k_{i+1}-1]$. To avoid this obstruction, our strategy is to find a smaller disk centered at the puncture satisfying the following two conditions:
\begin{enumerate}
\item[(a)]such $k$ does not occur as a weight within this smaller disk;
\item[(b)]away from this smaller disk, $k_i$ is bigger than the valuation of $\alpha$.
\end{enumerate}
Condition $(a)$ implies that the projection $(\D^+_{\dif}(V_D)/(t^{k_{i+1}}))^{\Gamma}\ra(\D^+_{\dif}(V_D)/(t^{k_{i}}))^{\Gamma}$
is an isomorphism in the smaller disk.
%Furthermore, we know $(\D^+_{\dif}(V_D)/(t^k))^{\Gamma}$ is free of rank $1$ on this smaller disk.
By condition $(b)$, we see that away from this smaller disk, both $(\D^+_{\dif}(V_D)/(t^{k_i}))^{\Gamma}$ and $(\D^+_{\dif}(V_D)/(t^{k_{i+1}}))^{\Gamma}$ are naturally isomorphic to $\D^{\dagger}_{\rig}(V_D)^{\varphi=\alpha, \Gamma=1}$. Consequently, the obstruction goes away in two different ways. However, to have this strategy work, one is led to the condition $(*)$ on the Hodge-Tate weight of $(\pi\circ f)(0)$.

Finally, applying the results of $\cite{L12}$, we know that the de Rham periods are actually crystalline with Frobenius eigenvalue $\alpha$ on the punctured disk. Thus $\dcris^+(V_D)^{\varphi=\alpha}$ is also free of rank $1$. This gives rise to the desired crystalline periods at the puncture.

\subsection*{Plan of the paper}

In section 2, we review the construction of the $\m$-module functor $\D^{\dag}_{\rig}$ and the functor $\D^+_{\dif}$ on families of $p$-adic Galois representations. In section 3, we introduce the notion of finite slope subspaces and recall some of the results from \cite{L12}. Section 4 contributes to the proof of the main result. In section 4.1, we show that the finite slope subspace of the
families of $p$-adic representations on the punctured disk is the punctured disk itself.
%This helps us to study the modules $\D^+_{\dif}(V_{D^{\ast}})^{\Gamma}$ and $(\D^+_{\dif}(V_{D^{\ast}})/(t^k))^{\Gamma}$
%on the punctured disk. Then we use a  show that $(\D^+_{\dif}(V_D)/(t^k))^{\Gamma}$ is always locally free of rank $1$ on the entire disk provided $k$ is sufficiently large.
In section 4.2, we work out the glueing argument in detail and prove Theorem ~\ref{main}.

\subsection*{Notations}
Let $p$ be a prime number. Fix an algebraic closure $\overline{\QQ}_p$ of $\Qp$ and let $\Cp$ be the completion of $\overline{\QQ}_p$ with respect to the normalized $p$-adic valuation $v_p$ such that $v_p(p)=1$. Let $K$ be a finite extension of $\Qp$ in $\overline{\QQ}_p$. Let $\OO_K$ be the ring of integers of $K$ and let $\pi_K$ be a uniformizer. Let $K_0$ denote the maximal unramified subextension of $K$. Choose a compatible system of primitive $p$-power roots of unity $(\zeta_{p^n})_{n\geq 0}$; i.e., each $\zeta_{p^n}$ is a primitive $p^n$-th root of 1, and $\zeta_{p^{n+1}}^p=\zeta_{p^n}$ for all $n\geq 0$. Write $K_n=K(\zeta_{p^n})$ for any $n\geq 1$ and let $K_{\infty}=\cup_{n\geq 1}K_n$. Let $K'_0$ denote the maximal unramifed extension of $K_0$ inside $K_{\infty}$. Moreover, we write $G_K=\Gal(\overline{K}/K)$, $H_K=\Gal(\overline{K}/K_{\infty})$, and $\Gamma=\Gamma_K=\Gal(K_{\infty}/K)$.

We adopt the notations of $p$-adic Hodge theory as in the standard literature (e.g., \cite{Ber04}). As for the variations of Fontaine's ``boldface $\mathbf{B}$-rings'', we adopt Berger and Colmez's language. In particular, we define
\[\btdag{,s}, \bdag{,s}_K, \bdag{}_K, \brig{,s}{,K}, \brig{}{,K}\]
as in \cite{Ber02} or \cite{BC07}. It is worth mentioning that for $s$ sufficiently large, the ring $\brig{,s}{,K}$ coincides with the ring $\mathcal{R}^s_{K'_0}$ in \cite{KL10}. Hence $\brig{}{,K}$ coincides with the Robba ring $\mathcal{R}_{K'_0}$.

For a reduced affinoid algebra $S$ over $\Qp$, let $|\cdot|_{\mathrm{sp}}$ be the spectral norm and $\textrm{val}_S$ the corresponding valuation. Let $\OO_S$ be the ring of integers with respect to $\textrm{val}_S$. By an $S$\textit{-linear representation} of $G_K$, we mean a free $S$-module $V_S$ of finite rank equipped with a continuous $S$-linear action of $G_K$. If $M(R)\subset M(S)$ is an affinoid subdomain, we write $V_R$ for the base change of $V_S$ from $S$ to $R$. For each point $x\in M(S)$, we write $V_x=V_S\otimes_S k(x)$ for the specialization of $V_S$ at $x$.

More generally, for a separated rigid analytic space $X$ over $\Qp$, by a \emph{family of $p$-adic representations} on $X$ we mean a locally free coherent $\mathcal{O}_X$-module equipped with a continuous $\mathcal{O}_X$-linear $G_K$-action.

\subsection*{Acknowledgements}
The authors would like to thank Kiran Kedlaya, Mark Kisin, and Liang Xiao for useful comments on earlier drafts of this paper.
%\bigskip
%\bigskip
%%%%%%%%%%%%%%%%%%%
%%%%%                                %%%%%
%%%%%      Chapter 2        %%%%%
%%%%%                                %%%%%
%%%%%%%%%%%%%%%%%%%

\section{Families of Galois representations}
We start with a brief review on the theory of $\m$-module functor $\D^{\dagger}_{\rig}$ for families
of $p$-adic Galois representations. This theory is first introduced by Berger-Colmez, and later generalized to
Robba rings by Kedlaya-Liu. We do not attempt to present a self-contained survey on the subject. We
refer the reader to \cite{BC07} and \cite{KL10} for more details.
%On the other hand, for the definition of the standard period rings (e.g., $\bdR^+$ and $\bcris^+$) used in $p$-adic Hodge theory, see for instance \cite{Ber04}.

\subsection{The module $\D^{\dag}_{\rig}(V_S)$}
The classical theory of $\m$-modules is introduced by Fontaine over $\mathbf{B}_K$, Cherbonnier-Colmez over $\bdag{}_K$, and Berger over $\mathbf{B}_{\rig,K}^\dagger$. In particular, there are equivalences of categories between $p$-adic representations of $G_K$ and categories of \'etale $\m$-modules over $\mathbf{B}_K$, $\bdag{}_K$, and $\brig{}{,K}$, respectively.

In Berger and Colmez's paper \cite{BC07}, they construct the overconvergent $\m$-module functor $S$-linear representations. More precisely, let $V_S$ be an $S$-linear representation of $G_K$ of rank $d$. For $s\geq s(V_S)$, one can construct a locally free $S\ho_{\Qp}\bdag{,s}_K$-module $\D^{\dag, s}_K(V_S)$ of rank $d$ such that there is a natural isomorphism
\[
(S\ho_{\Qp}\btdag{,s})\otimes_{S\ho\bdag{,s}_K}\D^{\dag, s}_K(V_S)\xrightarrow{\sim} (S\ho_{\Qp}\btdag{,s})\otimes_S V_S
\]
and, for any $x\in M(S)$, the specialization map $S/\MM_x\otimes_S \D^{\dag,s}_K(V_S)\rightarrow \D^{\dag,s}_K(V_x)$ is an isomorphism.
We put \[S\ho_{\Qp}\bdag{}_K=\bigcup_{s>0}S\ho_{\Qp}\bdag{,s}_K,\]
and define
\[\D^{\dag}_K(V_S)=(S\ho_{\Qp}\bdag{}_K)\otimes_{S\ho_{\Qp}\bdag{,s}_K}\D^{\dag, s}_K(V_S)=\bigcup_{s>s(V_S)}\D^{\dag, s}_K(V_S).\]
Clearly, the $S\ho_{\Qp}\bdag{}_K$-module $\D^{\dag}_K(V_S)$ is locally free of rank $d$ and satisfies the base change property $S/\MM_x\otimes_S \D^{\dag}_K(V_S)\xrightarrow{\sim} \D^{\dag}_K(V_x)$. Moreover, $\D^{\dag}_K(V_S)$ is equipped with commuting $\varphi,\Gamma$-actions. This makes $\D^{\dag}_K(V_S)$ an \emph{\'etale $\m$-module} over $S\ho_{\Qp}\bdag{}_K$ in the sense of \cite[Definition 2.8]{KL10}. However, the functor $\D^{\dag}_K$ is far from an equivalence of categories from $S$-linear representations to \'etale $\m$-modules over $S\ho_{\Qp}\bdag{}_K$ for general $S$.

There is also a family version of $\m$-modules over the Robba rings. Recall that $\brig{,s}{,K}$ is the completion of $\bdag{,s}_K$ with respect to the Fr\'echet topology. For $s>s(V_S)$, we define \[\D^{\dag, s}_{\rig, K}(V_S)=(S\ho_{\Qp}\brig{,s}{,K})\otimes_{S\ho\bdag{,s}_K}\D^{\dag,s}_K(V_S).\]
Put \[S\ho_{\Qp}\brig{}{,K}=\bigcup_{s>0}S\ho_{\Qp}\brig{,s}{,K},\]
and define
\[\D^{\dag}_{\rig,K}(V_S)=(S\ho_{\Qp}\brig{}{,K})\otimes_{S\ho_{\Qp}\brig{,s}{,K}}\D^{\dag, s}_{\rig,K}(V_S)=\bigcup_{s>s(V_S)}\D^{\dag, s}_{\rig,K}(V_S).\]
Then $\D^{\dag}_{\rig,K}(V_S)$ is an \emph{\'etale family of $\m$-module} over $S\ho_{\Qp}\brig{}{,K}$ in the sense of \cite[Definition 6.3]{KL10}.

Sheafify the above construction on the affinoid space $M(S)$, we obtain a sheaf of $\m$-modules. More precisely, define the presheaves $\mathscr{D}^{\dag, s}_{\rig, K}(V_S)$ and $\mathscr{D}^{\dag}_{\rig, K}(V_S)$ with respect to the weak $G$-topology of $M(S)$ by setting

\[\mathscr{D}^{\dag, s}_{\rig, K}(V_S)(M(S'))=\D^{\dag, s}_{\rig, K}(V_{S'}),\,\,\,\,\,\,\,\,\mathscr{D}^{\dag}_{\rig, K}(V_S)(M(S'))=\D^{\dag}_{\rig, K}(V_{S'})\]
for any affinoid subdomain $M(S')$ of $M(S)$. Thanks to \cite[Proposition 1.2.2]{L12}, we know that the presheaves $\mathscr{D}^{\dag, s}_{\rig, K}(V_S)$ and $\mathscr{D}^{\dag}_{\rig, K}(V_S)$ are indeed sheaves.

Finally, we can extend the above construction to any separated rigid analytic space $X$ over $\mathbb{Q}_p$ by glueing the affinoid pieces. Let $V_X$ be a family of $p$-adic representations over $X$. Let $\{M(S_i)\}_{i\in I}$ be an admissible covering of $X$ by affinoid subdomains such that the restriction of $V_X$ on $M(S_i)$ are free $S_i$-representations. Define the sheaf $\mathscr{D}^{\dag}_{\rig, K}(V_X)$ on $X$ by glueing the sheaves $\mathscr{D}^{\dag}_{\rig, K}(V_{S_i})$ for $i\in I$. This construction is clearly independent of the choice of the admissible covering.

\subsection{The module $\D^+_{\dif}(V_S)$}
In this section, we review the family version of Berger's $\D^+_{\dif}$ functor introduced in $\cite{L12}$. Equip $K_n[[t]]$ with the induced Fr\'echet topology via the identification
\[
K_n[[t]]=\varprojlim K_n[[t]]/(t^k)\cong \varprojlim K_n^k\cong K_n^{\NN}.
\]
%Thus we can define $S\ho_{\Qp}K_n[[t]]$.
For $n$ sufficiently large, say $n\geq n(s)$, we have the localization map $\iota_n:\brig{,s}{,K}\rightarrow K_n[[t]]$ (see \cite{Ber02}), which induces a continuous map $S\ho_{\Qp}\bdag{,s}_{\rig,K}\rightarrow S\ho_{\Qp}K_n[[t]]$. Define
\[\D^{K_n,+}_{\dif}(V_S)=(S\ho_{\Qp}K_n[[t]])\otimes_{\iota_n,S\ho_{\Qp}\bdag{,s}_{\rig,K}}\D^{\dag,s}_{\rig,K}(V_S).\]
It is clear that $\D^{K_n,+}_{\dif}(V_S)$ is a locally free $S\ho_{\Qp}K_n[[t]]$-module of rank $d$.

Abusing the notation, we still denote by $\iota_n$ the natural map $\iota_n:\D^{\dag,s}_{\rig,K}(V_S)\rightarrow \D^{K_n,+}_{\dif}(V_S)$. We also define
\[\D^{K,+}_{\dif}=\bigcup_{n\geq n(s)}\D^{K_n,+}_{\dif}(V_S).\]

\convention When the base field $K$ is clear, we drop the label $K$ in all of the ``$\D$-functors". In particular, $\D^{K,+}_{\dif}$ and $\D^{K_n, +}_{\dif}$ become $\D^+_{\dif}$ and $\D^{+,n}_{\dif}$, respectively.\\

Sheafifying the above construction, we can define the following presheaves.

\begin{defn}
Let $k$ be a positive integer. Define the presheaves $\mathscr{D}^{+,n}_{\dif}(V_S)$ and $\mathscr{D}^{+, n}_{\dif}(V_S)/(t^k)$ with respect to the weak $G$-topology of $M(S)$ by setting
\[\mathscr{D}^{+,n}_{\dif}(V_S)(M(S'))=\D^{+,n}_{\dif}(V_{S'}),\,\,\,\,\,\,\,\,\,\,(\mathscr{D}^{+,n}_{\dif}(V_S)/(t^k))(M(S'))=\D^{+,n}_{\dif}(V_{S'})/(t^k)\]
for any affinoid subdomain $M(S')$ of $M(S)$. Moreover, define the presheaves

\[\mathscr{D}^+_{\dif}(V_S)=\varinjlim_{n\rightarrow \infty}\mathscr{D}^{+,n}_{\dif}(V_S)\,\,\,\,\,\textrm{and}\,\,\,\,\,\, \mathscr{D}^+_{\dif}(V_S)/(t^k)=\varinjlim_{n\rightarrow \infty}\mathscr{D}^{+,n}_{\dif}(V_S)/(t^k).\]
\end{defn}
Thanks to \cite[Proposition 1.4.2]{L12}, we know that the presheaves $\mathscr{D}^{+,n}_{\dif}(V_S)$ and $\mathscr{D}^+_{\dif}(V_S)$ are indeed sheaves. By \cite[Proposition 1.4.5]{L12}, we know that the presheaf $\mathscr{D}_{\dif}^{+,n}(V_S)/(t^k)$ is a locally free coherent sheaf on $M(S)$, and the presheaf  $(\mathscr{D}_{\dif}^{+,n}(V_S)/(t^k))^{\Gamma}$ is a coherent sheaf on $M(S)$.
%Later in section 3, we shall relate $\mathscr{D}^{\dag}_{\rig}(V_S)$ with the modules $(\mathscr{D}^+_{\dif}(V_S)/(t^k))^{\Gamma}$ for various positive integers $k$.
Nevertheless, we can construct the sheaves $\mathscr{D}^+_{\dif}(V_X)$ and $\mathscr{D}^+_{\dif}(V_X)/(t^k)$ for any family of $p$-adic representations over a separated rigid analytic space $X$. We choose an admissible covering $\{M(S_i)\}_{i\in I}$ and glue up the sheaves $\mathscr{D}^+_{\dif}(V_{S_i})$'s (resp., $\mathscr{D}^+_{\dif}(V_{S_i})/(t^k)$'s). The resulting sheaf is independent of the choice of the covering.\\

To wrap up the section, we recap the definitions of the functors $\ddR^+$ and $\dcris^+$ in the family version. Recall that $\bdR^+/(t^k)$ is naturally a $p$-adic Banach space for any positive integer $k$. This naturally gives rise to a
Fr\'echet topology on
\[
\bdR^+=\varprojlim \bdR^+/(t^k).
\]
So we can define $S\ho_{\Qp}\bdR^+$. We can also define $S\ho_{\Qp}\bcris^+$ as $\bcris^+$ has a natural $p$-adic Banach space structure. For an $S$-linear representation $V_S$ of $G_K$, we set
\[\ddR^+(V_S)=((S\ho_{\Qp}\bdR^+)\otimes_S V_S)^{G_K},\] and \[\dcris^+(V_S)=((S\ho_{\Qp}\bcris^+)\otimes_S V_S)^{G_K}.\]
Just as in the field case, one can recover $\ddR^+(V_S)$ and $\dcris^+(V_S)$ from $\D^+_{\dif}(V_S)$ and $\D^{\dag}_{\rig}(V_S)$. By \cite[Lemma 4.3.1]{BC07}, we know that $\ddR^+(V_S)=\D_{\dif}^+(V_S)^{\Gamma}$. By \cite[Theorem 1.1.1]{Be13}, we have $\dcris^+(V_S)=(\D^{\dagger}_{\rig}(V_S))^{\Gamma}$.

%%%%%%%%%%%%%%%%%%%
%%%%%                                %%%%%
%%%%%      Chapter 3        %%%%%
%%%%%                                %%%%%
%%%%%%%%%%%%%%%%%%%

\section{Finite slope subspaces}
From now on, we take $K=\Qp$. Let $X$ be a separated and reduced rigid analytic space over some finite extension $L$ of $\Qp$. Let $V_X$ be a family of $p$-adic representations of $G_{\Qp}$ over $X$. We further assume that the Sen polynomial for $V_X$ is of the form $TQ(T)$ for some $Q(T)\in\mathcal{O}_X[T]$. Let $\alpha\in \mathcal{O}(X)^\times$ be an invertible analytic function on $X$.

Let us clarify one piece of notation. If $X'\subset X$ is an analytic subspace and $h$ is an analytic function on $X$, we write $X'_h$ for the non-vanishing locus of $h$ in $X'$. In particular, if $j$ is an integer, then $X'_{Q(j)}$ excludes exactly those points which has $-j$ as a Hodge-Tate weight.

\begin{defn}\label{finiteslopesubspace}
Let $(X, V_X, \alpha)$ be a triple as above. An analytic subspace $X_{fs}\subset X$ is called a \textit{finite slope subspace} of $X$ with respect to the pair $(V_X, \alpha)$ if it satisfies the following two conditions:
\begin{enumerate}
\item[(1)] For every integer $j\leq 0$, the subspace $(X_{fs})_{Q(j)}$ is Zariski-dense in $X_{fs}$.
\item[(2)] For any affinoid algebra $R$ over $L$ and any morphism $g:M(R)\rightarrow X$ which factors through $X_{Q(j)}$ for every integer $j\leq 0$, the morphism $g$ factors through $X_{fs}$ if and only if the natural map
\[\iota_n:(\D^{\dag}_{\rig}(V_R))^{\varphi=g^{\ast}(\alpha), \Gamma=1}\rightarrow (\D^{+, n}_{\dif}(V_R))^{\Gamma}\]
is an isomorphism for all sufficiently large $n$.
\end{enumerate}
\end{defn}

This definition is inspired by the notion of finite slope subspace introduced by Kisin in \cite{Kis03}. The key difference is that we can remove the ``$Y$-smallness'' condition imposed in Kisin's original definition. The following result guarantees the existence of finite slope subspace.

\begin{theorem}[\cite{L12}, Theorem 3.3.1]
Given any triple $(X, V_X, \alpha)$ as above, the rigid analytic space $X$ has a unique finite slope subspace $X_{fs}$.
\end{theorem}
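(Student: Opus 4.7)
The plan is to establish uniqueness and existence separately, with uniqueness reducing to a density argument and existence requiring a local construction plus a glueing step.

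For \textbf{uniqueness}, suppose $X_{fs}$ and $X'_{fs}$ both satisfy the definition. By condition $(1)$ for $X_{fs}$, the subset $T := \bigcap_{j \leq 0}(X_{fs})_{Q(j)}$ is Zariski-dense in $X_{fs}$. For any affinoid subdomain $M(R) \subset X_{fs}$ landing in $T$, apply condition $(2)$ for $X_{fs}$ to the composition $M(R) \hookrightarrow X_{fs} \hookrightarrow X$: this factors through $X_{fs}$, hence $\iota_n$ is an isomorphism for $n \gg 0$. Then condition $(2)$ for $X'_{fs}$ forces $M(R) \subset X'_{fs}$. Hence $T \subset X'_{fs}$, and since $X'_{fs}$ is a Zariski-closed analytic subspace, the density of $T$ in $X_{fs}$ yields $X_{fs} \subset X'_{fs}$. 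Symmetry finishes the argument.

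For \textbf{existence}, I would work locally on an admissible affinoid cover $\{M(S_i)\}$ of $X$ and glue. On each $M(S_i)$, the target of
\[
\iota_n : \D^{\dag}_{\rig}(V_{S_i})^{\varphi=\alpha,\,\Gamma=1} \longrightarrow \D^{+, n}_{\dif}(V_{S_i})^{\Gamma}
\]
is, after truncation by $t^k$, the global sections of a coherent sheaf on $M(S_i)$ by the results recalled in Section 2. After restricting to the Zariski-open subset $U_i := \bigcap_{j \leq 0} (M(S_i))_{Q(j)}$, the source admits a coherent model as well, since the obstruction to solving $(\gamma-1)x = y$ in $\D^+_{\dif}/(t^k)$ is supported at the excluded loci, so on $U_i$ the map $\iota_n$ becomes a morphism of coherent sheaves. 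Define $(X_{fs})_i$ to be the Zariski closure in $M(S_i)$ of the locus in $U_i$ where $\iota_n$ is an isomorphism for all sufficiently large $n$; applying the uniqueness argument locally shows the $(X_{fs})_i$ glue to a closed analytic subspace $X_{fs}$ of $X$, and condition $(1)$ holds by construction.

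The hard part will be verifying condition $(2)$, particularly the ``only if'' direction: that \emph{every} morphism $g : M(R) \to X_{fs}$ factoring through all $X_{Q(j)}$, $j \leq 0$, makes $\iota_n$ an isomorphism, not merely those factoring through the open locus used to define $(X_{fs})_i$. The delicate point is that the formation of $\D^{\dag}_{\rig}(V_{\cdot})^{\varphi=\alpha,\,\Gamma=1}$ does not a priori commute with base change, so one cannot simply pull the statement back from a dense locus. This requires identifying, on $X_{fs}$, a coherent model of the Frobenius-eigenspace whose formation is compatible with pullback; then Nakayama-type arguments combined with the Zariski density of the defining locus inside $(X_{fs})_i$ upgrade the isomorphism of $\iota_n$ from a dense subset of $M(R)$ to all of $M(R)$. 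This coherence-plus-spreading analysis is the core technical content and is precisely what is carried out in \cite{L12}.
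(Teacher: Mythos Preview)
The paper does not supply its own proof of this theorem: it is quoted verbatim as \cite[Theorem~3.3.1]{L12} and immediately used as a black box. There is therefore nothing in the present paper to compare your argument against, and indeed your final paragraph correctly identifies that the substantive work (coherence of the Frobenius-eigenspace sheaf and the spreading argument) lives in \cite{L12} rather than here.

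That said, one point in your uniqueness sketch deserves care. You assert that $T := \bigcap_{j \leq 0}(X_{fs})_{Q(j)}$ is Zariski-dense in $X_{fs}$, appealing to condition~(1). But condition~(1) only says that each individual $(X_{fs})_{Q(j)}$ is Zariski-dense; a countable intersection of Zariski-dense Zariski-opens need not be Zariski-dense without further input (there is no Baire-type statement for the Zariski topology). One typically handles this either by a dimension count on the vanishing loci $V(Q(j))$, or, more robustly, by running the argument not with points of $T$ but with affinoid test objects $M(R)$ on which each $Q(j)$ becomes a unit, and arranging these to detect $X_{fs}$ scheme-theoretically rather than set-theoretically. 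Your phrase ``affinoid subdomain $M(R) \subset X_{fs}$ landing in $T$'' elides exactly this issue: a genuine affinoid subdomain of $X_{fs}$ will essentially never land inside the countable intersection $T$, so you need a different family of test maps. This is not fatal to the overall strategy, but it is a real gap in the sketch as written.
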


The following result is going to play an important role in next section.

\begin{theorem}[\cite{L12}, Theorem 3.3.4]\label{mainlemma1}
Let $M(S)$ be an affinoid subdomain of $X_{fs}$ and let $|\cdot|_{\sp}$ denote the spectral norm taken on $S$. Then for any $n\geq n(V_S)$ and $k>\log_p|\alpha^{-1}|_{\sp}$, the natural map of sheaves
\[(\mathscr{D}^{\dag}_{\rig}(V_S))^{\varphi=\alpha, \Gamma=1}\rightarrow (\mathscr{D}^{+, n}_{\dif}(V_S)/(t^k))^{\Gamma}\]
is an isomorphism. As a consequence, we know that $(\mathscr{D}^{\dag}_{\rig}(V_{X_{fs}}))^{\varphi=\alpha, \Gamma=1}$ is a coherent sheaf on $X_{fs}$
\end{theorem}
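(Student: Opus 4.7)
\medskip

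The plan is to verify the isomorphism of sheaves by working locally on an affinoid $M(S) \subset X_{fs}$, after shrinking so that $V_S$ is free, and establishing injectivity and surjectivity of the induced map on global sections. In both parts the slope condition $k > \log_p|\alpha^{-1}|_{\sp}$ is the essential quantitative input, while the defining properties of $X_{fs}$ (Definition~\ref{finiteslopesubspace}) supply the structural input that promotes pointwise statements to sheaf-theoretic ones.

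For injectivity, I would argue pointwise. Suppose $y \in \D^{\dag}_{\rig}(V_S)^{\varphi=\alpha, \Gamma=1}$ satisfies $\iota_n(y) \in t^k \D^{+,n}_{\dif}(V_S)$. At any point $x \in M(S) \cap \bigcap_{j \leq 0} X_{Q(j)}$---and such $x$ form a Zariski-dense subset by condition (1) of Definition~\ref{finiteslopesubspace}---Berger's classical comparison identifies $\D^{\dag}_{\rig}(V_x)^{\varphi=\alpha(x),\Gamma=1}$ with $\dcris(V_x)^{\varphi=\alpha(x)}$ inside $\ddR^{+}(V_x)$. Hence $y_x$ is a crystalline period whose image in $\ddR^{+}(V_x)$ lies in $t^k \ddR^{+}(V_x)$, i.e.\ in the filtration piece of Hodge level at least $k$; weak admissibility of the sub-$\varphi$-module generated by $y_x$, together with $v_x(\alpha) \leq \log_p|\alpha^{-1}|_{\sp} < k$, forces $y_x = 0$. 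Zariski density then gives $y = 0$.

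For surjectivity, take a section $\bar z \in (\D^{+,n}_{\dif}(V_S)/(t^k))^{\Gamma}$. The strategy is first to lift $\bar z$ to a $\Gamma$-invariant $z \in \D^{+,n}_{\dif}(V_S)$, by showing the obstructions in continuous $\Gamma$-cohomology of the graded pieces $t^{i}\D^{+,n}_{\dif}(V_S)/t^{i+1}\D^{+,n}_{\dif}(V_S)$ for $i \geq k$ vanish generically---these are Hodge--Tate twists whose $H^0$ and $H^1$ vanish wherever $Q(-i) \neq 0$, an open dense subset of $M(S)$. Next, at each $x \in M(S) \cap \bigcap_{j\leq 0} X_{Q(j)}$, condition (2) of Definition~\ref{finiteslopesubspace} produces a unique pre-image $y_x \in \D^{\dag}_{\rig}(V_x)^{\varphi=\alpha(x),\Gamma=1}$, and a coherence argument based on Proposition 1.4.5 of \cite{L12} assembles these into a global section $y$ of $\D^{\dag}_{\rig}(V_S)^{\varphi=\alpha, \Gamma=1}$ mapping to $\bar z$.

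The main obstacle is the global $\Gamma$-cohomology lifting: the slope condition suppresses cohomology generically, but the Hodge--Tate weights of $V_S$ may jump at closed subsets of $M(S)$, where a priori the obstruction need not vanish. Resolving this requires combining the Zariski density in condition (1) of Definition~\ref{finiteslopesubspace} with the coherence of $\mathscr{D}^{+,n}_{\dif}(V_S)/(t^k)$ to spread the isomorphism from a dense set of good fibers to the whole affinoid. I expect this uniform family-level gluing step---which must track both the Frobenius eigenvalue and $\Gamma$-invariance simultaneously---to be the crux of the argument, and it is precisely where the full strength of the finite-slope formalism of \cite{L12} is used.
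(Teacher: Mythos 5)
This theorem is not proved in the present paper; it is quoted verbatim from \cite{L12} (Theorem~3.3.4), so there is no in-paper argument to compare your sketch against. What you have written is therefore best judged as an independent reconstruction of Liu's proof, and as such it contains a genuine gap that you yourself flag but do not close.

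Your injectivity argument is sound in outline: at a point $x$ in $\bigcap_{j\le 0}X_{Q(j)}$, a nonzero $y_x\in\D^{\dag}_{\rig}(V_x)^{\varphi=\alpha(x),\Gamma=1}$ with $\iota_n(y_x)\in t^k\D^{+,n}_{\dif}(V_x)$ would span a rank-one $\varphi$-stable subobject whose Newton slope $v_p(\alpha(x))\le\log_p|\alpha^{-1}|_{\sp}<k$ is strictly less than its Hodge number, which is impossible inside an \'etale $\m$-module; Zariski density of the good locus plus the fact that $\D^{\dag}_{\rig}(V_S)$ is torsion-free over the reduced base then forces $y=0$. Your surjectivity argument, however, is not a proof. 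The whole content of the theorem is that the isomorphism of condition~(2) in Definition~\ref{finiteslopesubspace}, which by definition only applies to maps $g$ factoring through $\bigcap_{j\le 0}X_{Q(j)}$, persists across the bad locus once one truncates by $t^k$. You reduce this exactly to ``spreading the isomorphism from a dense set of good fibers to the whole affinoid,'' note that the $\Gamma$-cohomological obstruction to lifting $\bar z$ need not vanish at points where $Q(j)=0$ for some $-j\ge k$, and then declare this gluing step to be ``the crux of the argument'' without supplying it. That is precisely where the difficulty lies, and leaving it unresolved leaves the theorem unproved. Moreover, the intermediate step of lifting $\bar z$ all the way to $\D^{+,n}_{\dif}(V_S)^{\Gamma}$ before descending to $\D^{\dag}_{\rig}$ is a strictly stronger statement than the theorem asserts (the theorem only claims the \emph{composite} $\iota_n$ followed by reduction mod $t^k$ is an isomorphism), and it is not clear that this stronger lift exists without already knowing the theorem. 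To complete the argument one has to work directly with the truncated module, typically via a local analysis near each bad fiber using the slope bound $k>\log_p|\alpha^{-1}|_{\sp}$ to control the $\varphi$-action modulo $t^k$; this is carried out in \cite{L12} and is where the finite-slope formalism earns its keep.
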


%%%%%%%%%%%%%%%%%%%
%%%%%                                %%%%%
%%%%%      Chapter 4        %%%%%
%%%%%                                %%%%%
%%%%%%%%%%%%%%%%%%%

\section{Proof of the main result}
%In this section, we prove Theorem ~\ref{main}. Our approach is to study the associated family of $G_{\Qp}$-representations on the punctured disk.

%First, let us clarify the settings. We take exclusively $K=\Qp$ and $S=L\langle T\rangle$. In particular, $D=M(S)$ is the closed unit disk over $L$. We write $D^{\ast}$ for the punctured disk with the origin removed. Let $\mathcal{C}$ be the eigencurve (for any tame level $N$) and let $\pi:\mathcal{C}\rightarrow \mathcal{W}$ be the projection map onto the weight space. Set $q=4$ if $p=2$ and $q=p$ otherwise. As a rigid space, $\mathcal{W}$ is a disjoint union $\bigsqcup \mathcal{W}_{\psi}$ where $\psi$ runs through all characters of $(\ZZ/Nq\ZZ)^{\times}$ and each $\mathcal{W}_{\psi}$ is isomorphic to an open unit disk.

%We start with a rigid analytic family of finite slope overconvergent eigenform on the punctured disk $D^{\ast}$; that is, a morphism $f:D^{\ast}\rightarrow \mathcal{C}$ of rigid analytic spaces. We allow $D$ to be the closed unit disk over a finite extension of $\Qp$ because we want to investigate the properness at all closed points, not just those who have weights in $\Qp$.

%Suppose the map $\pi\circ f:D^{\ast}\rightarrow \mathcal{W}$ extends to a morphism on the whole disk $D$. Since $D^{\ast}$ is connected, the image of $\kappa$ must live in some $\mathcal{W}_{\psi}$. Upon twisting, we may assume that the image lives in $\mathcal{W}_1$.

We keep the setup as in the introduction. We further assume that the map $\pi\circ f:D\rightarrow\mathcal{W}$ is dominant. Otherwise the desired statement is obvious. We normalize the weight function on $\mathcal{C}$ so that a classical modular form of weight $k$ has weight $k-1$. Let $\kappa\in\mathcal{O}(D)$ be the pullback of the weight function via the map $\pi\circ f$; in particular, we have $\kappa(0)\in\mathcal{O}_L$.  Let $\alpha:D^{\ast}\rightarrow \mathbb{G}_m$ be the pullback of the $U_p$-eigenvalue.

We claim that $V_{D^{\ast}}$ extends uniquely to a family of $p$-adic representations $V_D$ on the entire disk $D$. Indeed, the family on the punctured disk gives a continuous map from the universal deformation ring
$\widehat{R}^\circ_{\overline{V}}$ to $\mathcal{O}(D^{\ast})$.
Since $\widehat{R}^\circ_{\overline{V}}$ is compact, the image of this map lives in the maximal compact subgroup of
$\mathcal{O}(D^{\ast})$, which is exactly $\OO_S=\OO_L\langle T\rangle$.
This yields that the family extends to a $G_{\mathbb{Q}}$-representation $V_D$ on $D$.
%On the other hand, $\alpha$ has bounded valuation on the punctured disk. Hence $\alpha$ extends to a holomorphic function on the whole disk.

%To prove the main theorem, we need to show that the crystalline period extends to the entire disk and $\alpha$ extends with nonzero value at the puncture.

%%%%%%%%%%%%%
%%%%%   4.1   %%%%%
%%%%%%%%%%%%%
\subsection{Finite slope subspace of the punctured disk}
%Let $V_D$ be the family of $p$-adic representations of $G_{\Qp}$ over $D$ constructed above. Let $V_{D^{\ast}}$ denote the restriction of $V_D$ on the punctured disk $D^{\ast}$. Note that the generalized Hodge-Tate weights of $V_D$ are $0,\kappa$ with $\kappa\in\mathcal{O}_L\langle T\rangle$.

We start with recalling the following result from \cite{L12}.

%Within the current section, we only assume that $\kappa(0)\in L\setminus\mathbb{N}$.

\begin{prop}[\cite{L12}, Proposition 5.1.4]\label{prop:fs-C}
The finite slope subspace of $\mathcal{C}$ with respect to $(V_{\mathcal{C}}, \alpha_{\mathcal{C}})$ is $\mathcal{C}$ itself.
\end{prop}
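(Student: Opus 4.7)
The plan is to verify directly the two conditions of Definition~\ref{finiteslopesubspace} for the choice $X_{fs} = \mathcal{C}$; by the uniqueness clause of \cite[Theorem 3.3.1]{L12} this will force the finite slope subspace to equal $\mathcal{C}$ itself.

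For condition (1), I would exploit the Zariski-density of points of $\mathcal{C}$ attached to classical cuspidal eigenforms of weight $k \geq 2$, which is built into the construction in \cite{CM98} and \cite{Buz07}. At such a point the associated Galois representation has Hodge--Tate weights $\{0, k-1\}$, so (with the sign convention of the paper) $Q(T) = T + (k-1)$ and $Q(j)$ vanishes in the range $j \leq 0$ only when $k = 1 - j$. For each fixed $j \leq 0$ this excludes at most a single fiber of the weight map, a nowhere-dense analytic subset; removing it leaves a Zariski-dense set of classical points inside $\mathcal{C}_{Q(j)}$, so $\mathcal{C}_{Q(j)}$ is Zariski-dense in $\mathcal{C}$.

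Condition (2) is the main effort. Suppose $g: M(R) \to \mathcal{C}$ factors through $\mathcal{C}_{Q(j)}$ for every integer $j \leq 0$; I would then show that
\[
\iota_n: \D^{\dag}_{\rig}(V_R)^{\varphi = g^{\ast}(\alpha),\, \Gamma = 1} \longrightarrow \D^{+,n}_{\dif}(V_R)^{\Gamma}
\]
is an isomorphism for $n$ sufficiently large. The strategy is to transport the universal $U_p$-eigenform along the eigencurve into $\D^{\dag}_{\rig}$: pulling back by $g$, Kisin's construction \cite{Kis03} in its family form, as developed in \cite{L12}, supplies a canonical nowhere-vanishing element $e \in \D^{\dag}_{\rig}(V_R)^{\varphi = g^{\ast}(\alpha),\, \Gamma = 1}$. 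On the target side, the hypothesis that $Q(j)$ is invertible on $M(R)$ for every $j \leq 0$ forces $\D^{+,n}_{\dif}(V_R)^{\Gamma}$ to be locally free of rank one; a check at the Zariski-dense set of classical specializations, where $\iota_n(e)$ is known to coincide with the crystalline period vector attached to the classical modular form, shows that $\iota_n(e)$ is a generator, so $\iota_n$ is an isomorphism.

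The principal obstacle lies in the family version of Kisin's construction: producing the eigenvector $e$ in $\D^{\dag}_{\rig}(V_R)$, and establishing the rank-one statement for $\D^{+,n}_{\dif}(V_R)^{\Gamma}$, uniformly over $M(R)$. This is exactly the content of the interpolation arguments carried out in \cite{L12}; with those in hand, and with Theorem~\ref{mainlemma1} supplying the comparison on affinoid subdomains, both defining conditions of the finite slope subspace are met and the proposition follows.
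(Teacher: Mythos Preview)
The paper does not supply its own proof of this proposition: it is simply quoted as \cite[Proposition~5.1.4]{L12}, with no argument reproduced here. So there is nothing in the present paper to compare your sketch against beyond the bare citation, and you are right that the substantive content lives entirely in \cite{L12}.

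Your outline is a reasonable reconstruction of how such a proof goes, and you correctly locate the two nontrivial inputs (the family eigenvector $e$ and the rank-one control of $\D^{+,n}_{\dif}(V_R)^{\Gamma}$) in \cite{L12}. One step, however, is not quite right as written. Condition~(2) of Definition~\ref{finiteslopesubspace} is a statement about \emph{every} test morphism $g:M(R)\to\mathcal{C}$ satisfying the $Q(j)$-invertibility hypothesis, and a general such $M(R)$ need not contain any classical points at all (take $g$ to be the inclusion of a single non-classical point, or an Artinian thickening thereof). Your proposed verification ``a check at the Zariski-dense set of classical specializations'' therefore cannot be carried out on $M(R)$. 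The correct order of operations is to prove the generator/isomorphism statement on affinoid pieces of $\mathcal{C}$ itself, where classical points are genuinely dense, and then show that the isomorphism $\iota_n$ base-changes along $g$; this last step requires knowing that formation of $(\cdot)^{\varphi=\alpha,\Gamma=1}$ and $(\cdot)^{\Gamma}$ commutes with the relevant base change, which is part of the machinery developed in \cite{L12}. You also implicitly assume that $e$ generates the source $\D^{\dag}_{\rig}(V_R)^{\varphi=g^\ast(\alpha),\Gamma=1}$, which needs its own argument (injectivity of $\iota_n$ together with the rank-one target is one route). None of this is fatal to your plan, but as stated the sketch elides exactly the points where the real work in \cite{L12} is done.
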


\begin{cor}
The finite slope subspace $(D^{\ast})_{fs}$ of the punctured disk $D^{\ast}$ associated to $(V_{D^{\ast}}, \alpha)$ is $D^{\ast}$ itself.
\end{cor}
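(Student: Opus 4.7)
The plan is to verify directly that $D^{\ast}$ itself satisfies both defining properties of a finite slope subspace given in Definition~\ref{finiteslopesubspace}, and then invoke the uniqueness of the finite slope subspace to conclude $(D^{\ast})_{fs}=D^{\ast}$. The crucial external input is Proposition~\ref{prop:fs-C}, which supplies the corresponding statement for $\mathcal{C}$, together with functoriality of all the constructions in Section~2 along $f:D^{\ast}\to\mathcal{C}^{\circ}\subset\mathcal{C}$.

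For condition~(1), I would argue as follows. The Sen polynomial of $V_{D^{\ast}}$ is the pullback along $f$ of the Sen polynomial of $V_{\mathcal{C}}$, and its nontrivial root is (up to sign) the weight function $\kappa=f^{\ast}\pi^{\ast}w\in\mathcal{O}(D)$. Since $\pi\circ f$ is assumed dominant, $\kappa$ is a non-constant analytic function on $D$, so for each integer $j\leq 0$ the locus $\{\kappa=j\}$ is a discrete closed subspace of $D^{\ast}$. Its complement $(D^{\ast})_{Q(j)}$ therefore meets every open annulus and is Zariski-dense in $D^{\ast}$.

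For condition~(2), let $g:M(R)\to D^{\ast}$ be any morphism factoring through $(D^{\ast})_{Q(j)}$ for every integer $j\leq 0$. Since the Sen polynomial commutes with pullback, $(D^{\ast})_{Q(j)}=f^{-1}(\mathcal{C}_{Q_{\mathcal{C}}(j)})$, so the composite $f\circ g$ factors through $\mathcal{C}_{Q_{\mathcal{C}}(j)}$ for every such $j$. Proposition~\ref{prop:fs-C} then tells us that the defining property of $\mathcal{C}_{fs}=\mathcal{C}$ applies to $f\circ g$, yielding an isomorphism
\[
\iota_n:\D^{\dag}_{\rig}(V_R)^{\varphi=(f\circ g)^{\ast}\alpha_{\mathcal{C}},\,\Gamma=1}\longrightarrow\D^{+,n}_{\dif}(V_R)^{\Gamma}
\]
for all $n$ sufficiently large. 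But $(f\circ g)^{\ast}\alpha_{\mathcal{C}}=g^{\ast}\alpha$, and pulling $V_{\mathcal{C}}$ back along $f\circ g$ yields the same $R$-linear representation as pulling $V_{D^{\ast}}$ back along $g$, so the above map is exactly the one appearing in condition~(2) for the triple $(D^{\ast},V_{D^{\ast}},\alpha)$. Hence $D^{\ast}$ satisfies both defining conditions, and uniqueness of the finite slope subspace forces $(D^{\ast})_{fs}=D^{\ast}$.

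The only point that is not entirely formal is the compatibility of the Sen polynomial and the functors $\D^{\dag}_{\rig}$, $\D^{+,n}_{\dif}$ with base change along $f$; I expect this to be the main technical check. However, the sheaves reviewed in Section~2 are built by glueing over affinoid subdomains whose $p$-adic representations enjoy the required base-change property, so the pullback along $f$ of the comparison map $\iota_n$ constructed on $\mathcal{C}$ coincides with the corresponding map on $D^{\ast}$, and no new work is needed beyond citing these compatibilities.
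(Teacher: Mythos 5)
Your proof is correct and follows essentially the same route as the paper: verify conditions (1) and (2) of the definition directly, with condition (2) deduced by pulling back along $f$ from the statement that $\mathcal{C}$ is its own finite slope subspace (Proposition~\ref{prop:fs-C}). Your argument for condition (1) via the weight function $\kappa$ being a non-constant analytic function on $D$, so that each fiber $\{\kappa = j\}$ is discrete, is slightly more explicit than the paper's appeal to dominance and Zariski density of non-integral weights on $\mathcal{C}$, but it reduces to the same observation; the remainder, including the identification $(f\circ g)^{\ast}\alpha_{\mathcal{C}} = g^{\ast}\alpha$ and the invocation of uniqueness, matches the paper.
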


\begin{proof}
We need to check that $D^{\ast}$ satisfies the conditions (1) and (2) in Definition~\ref{finiteslopesubspace}. Since $f$ is dominant and the subspace of points with non-integral weights is Zariski dense in $\mathcal{C}$, we deduce that the subspace of points with non-integral weights is Zariski dense in $D^{\ast}$. This implies that $D^\ast$ satisfies the condition (1).

Now we check condition (2). We have to show the following: for any affinoid algebra $R$ and any morphism $g:M(R)\rightarrow D^{\ast}$ that factors though $D^{\ast}_{Q(j)}$ for all $j\leq 0$, the natural map
\[ \D^{\dag}_{\rig}(V_R)^{\varphi=g^{\ast}\alpha, \Gamma=1}\rightarrow \D^{+, n}_{\dif}(V_R)^{\Gamma}\]
is an isomorphism for all $n$ sufficiently large. Note that $g$ factors though $D^{\ast}_{Q(j)}$ if and only if $f\circ g$ factors though $\mathcal{C}_{Q(j)}$. By Proposition \ref{prop:fs-C}, we have
\[
\D^{\dag}_{\rig}(V_R)^{\varphi=g^{\ast}\alpha, \Gamma=1}=\D^{\dag}_{\rig}(V_R)^{\varphi=g^{\ast}f^{\ast}\alpha_{\mathcal{C}}, \Gamma=1}\cong \D^{+, n}_{\dif}(V_R)^{\Gamma}
\]
for all $n$ sufficiently large.
\end{proof}

\begin{prop}\label{isomLiu}
For any affinoid subdomain $M(R)$ of the puncture closed unit disk and any integer $k>\log_p|\alpha^{-1}|_\mathrm{sp}$ (the spectral norm is taken over $M(R)$),
the natural map
\[
(\mathscr{D}^{\dag}_{\rig}(V_R))^{\varphi=\alpha,\Gamma=1}\rightarrow (\mathscr{D}_{\dif}^{+}(V_R)/(t^k))^{\Gamma}
\]
is an isomorphism. In particular, $(\mathscr{D}_{\dif}^{+}(V_R)/(t^k))^{\Gamma}$ is locally free of rank $1$.
\end{prop}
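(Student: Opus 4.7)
The plan is to divide the proposition into the isomorphism assertion and the rank assertion, and to handle them in turn.

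For the isomorphism, the argument is nearly formal once the preceding corollary is in hand. That corollary gives $(D^\ast)_{fs}=D^\ast$, so $M(R)\subseteq (D^\ast)_{fs}$ fits into the hypotheses of Theorem~\ref{mainlemma1}. Applying that theorem produces, for every $n\geq n(V_R)$ and every $k>\log_p|\alpha^{-1}|_{\mathrm{sp}}$, an isomorphism
\[
(\mathscr{D}^{\dag}_{\rig}(V_R))^{\varphi=\alpha,\Gamma=1}\xrightarrow{\;\sim\;}(\mathscr{D}^{+,n}_{\dif}(V_R)/(t^k))^{\Gamma}.
\]
By definition $\mathscr{D}^+_{\dif}(V_R)/(t^k)=\varinjlim_n \mathscr{D}^{+,n}_{\dif}(V_R)/(t^k)$, with $\Gamma$-equivariant injective transition maps induced by $K_n[[t]]\hookrightarrow K_{n+1}[[t]]$. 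Since taking $\Gamma$-invariants commutes with filtered colimits of injective systems (a $\Gamma$-fixed element of the union lies and is fixed in some stage), I obtain $(\mathscr{D}^+_{\dif}(V_R)/(t^k))^{\Gamma}=\varinjlim_n(\mathscr{D}^{+,n}_{\dif}(V_R)/(t^k))^{\Gamma}$. The terms of this colimit from $n\geq n(V_R)$ on are all canonically identified with $(\mathscr{D}^{\dag}_{\rig}(V_R))^{\varphi=\alpha,\Gamma=1}$, so the colimit is that same sheaf, giving the desired isomorphism.

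For the local freeness of rank $1$, Theorem~\ref{mainlemma1} already upgrades $(\mathscr{D}^{\dag}_{\rig}(V_{D^\ast}))^{\varphi=\alpha,\Gamma=1}$ to a coherent sheaf on $D^\ast$, and it remains to pin down its rank. Since $\pi\circ f$ is dominant and $D$ is one-dimensional, $f$ is non-constant; pulling back the Zariski dense set of classical modular points of weight $\geq 2$ in $\mathcal{C}^\circ$ yields a Zariski dense subset of classical points in $D^\ast$. At each such point $x$, $V_x$ is crystalline with $\alpha(x)$ a simple Frobenius eigenvalue on $\dcris(V_x)$, so via Berger's identification $\dcris^+(V_x)=(\D^{\dag}_{\rig}(V_x))^{\Gamma}$ and the specialization property of $\mathscr{D}^{\dag}_{\rig}$, the fiber of our coherent sheaf at $x$ is one-dimensional. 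Upper semicontinuity of fiber dimension combined with density then forces the generic rank to be exactly $1$, and the reducedness of $M(R)$ together with coherence rules out torsion and yields local freeness of rank $1$. Transporting along the isomorphism of the first paragraph transfers the conclusion to $(\mathscr{D}^+_{\dif}(V_R)/(t^k))^{\Gamma}$.

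The main obstacle is the rank-one claim, not the isomorphism. Passing from fiber dimension data at a dense subset of classical points to genuine local freeness of a coherent sheaf on a rigid curve requires one to preclude torsion at the remaining points; this is where the reducedness of $M(R)$ and the specialization compatibility of $\mathscr{D}^{\dag}_{\rig}$ enter crucially. The isomorphism half, by contrast, is essentially a bookkeeping exercise combining the finite slope subspace theorem with the fact that $\Gamma$-invariants commute with the injective filtered colimit defining $\mathscr{D}^+_{\dif}$.
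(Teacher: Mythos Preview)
Your argument for the isomorphism matches the paper's: both invoke the preceding corollary to place $M(R)$ inside $(D^\ast)_{fs}$ and then apply Theorem~\ref{mainlemma1}. Your remark about passing to the colimit over $n$ is a reasonable clarification the paper leaves implicit.

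For the rank-one claim the paper takes a different route, simply invoking the argument of \cite[Theorem~5.4.3]{L12} together with the fact that $D^\ast$ is a smooth curve. Your density-of-classical-points strategy is natural and close in spirit to what that reference does, but as written it has a genuine gap: the assertion that ``the reducedness of $M(R)$ together with coherence rules out torsion'' is false. A coherent sheaf on a reduced (even smooth) curve can certainly have torsion; skyscraper sheaves are the obvious example. What actually excludes torsion here is that $(\mathscr{D}^{\dag}_{\rig}(V_R))^{\varphi=\alpha,\Gamma=1}$ sits as an $R$-submodule of $\D^{\dag}_{\rig}(V_R)$, which is locally free over $R\widehat\otimes_{\Qp}\brig{}{,\Qp}$ and hence $R$-torsion-free. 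Once torsion-freeness is in hand, smoothness of the curve gives local freeness, and the rank can then be read off at any single convenient point.

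A second, smaller point: your identification of the fiber at a classical $x$ with $\dcris^+(V_x)^{\varphi=\alpha(x)}$ is too quick. Taking $(\varphi=\alpha,\Gamma=1)$-invariants need not commute with specialization, so the natural map from the fiber of the invariant sheaf to $(\D^{\dag}_{\rig}(V_x))^{\varphi=\alpha(x),\Gamma=1}$ is not a priori an isomorphism. For the argument you only need the rank at one point once local freeness is known, and for that a more careful base-change statement (as in \cite{L12}) is required.
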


\begin{proof}
Since the finite slope subspace of the punctured disk is itself, the isomorphism follows immediately form Theorem~\ref{mainlemma1}. Since $D^\times$ is a smooth curve, we may apply the argument of \cite[Theorem 5.4.3]{L12} to deduce that $\mathscr{D}^{\dag}_{\rig}(V_R))^{\varphi=\alpha,\Gamma=1}$ is locally free of rank 1.
%By \cite[Theorem 5.4.3]{L12}, the coherent sheaf $\mathscr{D}^{\dagger}_{\rig}(V_{\widetilde{\mathcal{C}}})^{\varphi=\alpha_{\mathcal{C}}, \Gamma=1}$ is locally free of rank $1$. Note that $f:D^{\ast}\rightarrow \widetilde{\mathcal{C}}$ is flat, and the formulation of $\m$-module $\mathscr{D}^{\dagger}_{\rig}$ is compatible with flat base change. So $(\mathscr{D}^{\dag}_{\rig}(V_R))^{\varphi=\alpha,\Gamma=1}$ is also locally free of rank $1$.
\end{proof}

%A complete understanding of the finite slope subspace allows us to apply Theorem~\ref{mainlemma1}.

\subsection{Proof of Theorem \ref{main}}

%We have the following observation about $p$-adic Liouville number.

We first introduce a piece of notation: for any $n\in\mathbb{N}$, let $S_{n}=L\langle p^{-n}T\rangle$, and let $S_{n,n'}=L\langle p^{-n}T, p^{n'}T^{-1}\rangle$ for $n'>n$. To simplify the notation, we write $V_n, V_{n, n'}$ for $V_{S_n}$ and $V_{S_{n, n'}}$, respectively. Before proceeding, let us make the following observation on the growth rate of $\alpha$.

\begin{lemma}\label{lem:alpha-val}
There exist positive integers $N$ and $c
_0$ such that for any $x\in M(S_N)$, $v_p(\alpha(x))\leq c_0v_p(x)$.
\end{lemma}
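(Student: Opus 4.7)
The plan is to combine the fact, recalled in the introduction, that $\alpha$ extends to an analytic function on all of $D$ with the hypothesis $\alpha\in\mathcal{O}(D^{\ast})^{\times}$. Since $\alpha$ has no zeros on the punctured disk and is not identically zero, its only possible zero on $D$ lies at the origin, and has some finite order $m\in\mathbb{Z}_{\geq 0}$. The first step is therefore to factor
\[
\alpha=T^{m}\,u,
\]
where $u\in\mathcal{O}(D)$ satisfies $u(0)\neq 0$ and is non-vanishing on all of $D$; in particular $u$ is a unit in the Tate algebra $L\langle T\rangle=\mathcal{O}(D)$.

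The second step is to bound $v_p(u(x))$ uniformly on $D$. Expanding $u=a_0+a_1T+a_2T^2+\cdots$, the standard characterization of units in $L\langle T\rangle$ forces $|a_i|<|a_0|$ for every $i\geq 1$. By the ultrametric inequality one then has $|u(x)|=|a_0|$ for every $x\in D$, so $v_p(u(x))$ is the constant $v_p(a_0)$ on $D$.

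Combining the two steps yields $v_p(\alpha(x))=m\,v_p(x)+v_p(a_0)$ for every $x\in D$. Choosing any positive integer $N\geq\max(v_p(a_0),1)$, one has $v_p(x)\geq N\geq v_p(a_0)$ for all $x\in M(S_N)$, and therefore $v_p(\alpha(x))\leq (m+1)\,v_p(x)$; taking $c_0=m+1$ completes the argument. There is no serious obstacle here: the lemma is really just a consequence of the structure of units in the Tate algebra on the closed unit disk, together with the (quoted) extension of $\alpha$ across the puncture.
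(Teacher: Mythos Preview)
Your argument is correct and follows essentially the same approach as the paper: the paper's own proof simply says that $\alpha$, being bounded on $D^{\ast}$, extends across the puncture, and that ``by looking at the holomorphic expansion of $\alpha$ at the origin, the assertion becomes clear.'' Your proposal spells out exactly what this means---factoring $\alpha=T^{m}u$ with $u\in L\langle T\rangle^{\times}$, using the characterization of Tate-algebra units to get $|u(x)|=|a_0|$ on $D$, and then reading off $c_0=m+1$---so it is a faithful (and more explicit) execution of the paper's sketch.
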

\begin{proof}
Since $\alpha$ is bounded in $D^\ast$, it extends to a rigid analytic function on the entire disk. By looking at the holomorphic expansion of $\alpha$ at the origin, the assertion becomes clear.
\end{proof}

From now on, we assume that $\kappa(0)$ satisfies $(*)$.
%\begin{lemma}
%Let $\lambda\in\overline{\mathbb{Z}}_p$. If $\lambda$ satisfying $(*)$, then for any $c>0$, there exists some $m(c,\lambda)\in\mathbb{N}$ such that for any $m\geq m(c,\lambda)$, there is no integer in $[0, cm]$ congruent to $\lambda$ modulo $p^m$.
%\end{lemma}
%\begin{proof}
%If $\lambda\in\Zp$, this is obvious from the definition of $(*)$. If $\lambda\notin\Zp$, then for sufficiently large $m$, there is no integer congruent to $\lambda$ modulo $p^m$.
%\end{proof}

\begin{lemma}\label{sequence}
There exists an increasing sequence of positive integers $\{k_i\}_{i\in\mathbb{N}}$ such that, for each $i$,
\begin{enumerate}
\item[(a)] $k_i-1> c_0(2+\log_p k_{i+1})$, and
\item[(b)] there is no integer in $[k_{i}, k_{i+1}]$ congruent to $\kappa(0)$ modulo $p^{\lceil\log_p k_{i+1}\rceil}$.
\end{enumerate}
\end{lemma}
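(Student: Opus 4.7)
The plan is to construct the $k_i$ inductively: given $k_i$ large enough, I find $k_{i+1}\in(k_i, U_i)$ satisfying (b), where $U_i := p^{(k_i-1)/c_0 - 2}$ (so (a) is equivalent to $k_{i+1} < U_i$). The three disjoint clauses of $(*)$ will be treated separately, and case (2) is the only one with substantive content.

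If $\kappa(0)\in\NN$ (case (3)), I start with $k_0 > \kappa(0)$ and take $k_{i+1} = k_i+1$ (replaced by $k_i+2$ in the rare event that $k_i+1$ is a power of $p$ and $\kappa(0)=0$). The integers $\equiv\kappa(0)\pmod{p^{m'}}$, with $m' = \lceil\log_p k_{i+1}\rceil$, are exactly $\kappa(0) + np^{m'}$ for $n\in\ZZ$; the only possibly relevant ones are $n=0$ and $n=1$, but $\kappa(0)<k_i$ and $\kappa(0)+p^{m'}\geq p^{m'}\geq k_{i+1}$, with equality ruled out by the edge-case adjustment. If $\kappa(0)\notin\Zp$ (case (1)), the distance $d := \inf_{j\in\ZZ}|j-\kappa(0)|_p$ is positive since $\Zp$ is closed in $\overline{\ZZ}_p$; choosing $M$ with $p^{-M}<d$ and starting with $k_0\geq p^{M-1}$, no integer is $\equiv\kappa(0)\pmod{p^M}$, so (b) is automatic for $k_{i+1}=k_i+1$.

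The main case is (2), with $\kappa(0)\in\Zp\setminus\NN$ and $\kappa(0)^{\{m\}}/m\to\infty$. Write the $p$-adic expansion $\kappa(0) = \sum_{n\geq 0}a_np^n$ with $a_n\in\{0,\dots,p-1\}$, and set $m_{\min} := \lceil\log_p(k_i+1)\rceil$, $m_{\max} := \lfloor\log_p(U_i-1)\rfloor$; these are of order $\log_p k_i$ and $k_i/c_0$ respectively. The key subclaim is that for $k_i$ sufficiently large there exists $n^*\in[m_{\min}+1, m_{\max}-1]$ with $a_{n^*}\neq 0$. Indeed, if all these digits vanished, then $\kappa(0)^{\{m_{\max}\}} = \kappa(0)^{\{m_{\min}+1\}} < p^{m_{\min}+1}\leq p^2(k_i+1)$, so $\kappa(0)^{\{m_{\max}\}}/m_{\max}$ would stay bounded by roughly $p^2 c_0$ as $k_i\to\infty$, contradicting $\kappa(0)^{\{m\}}/m\to\infty$ once $m_{\max}$ is large enough.

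Given such an $n^*$, I take $m^* = n^*+1$ and $k_{i+1} = p^{m^*-1}+1$. Then $\lceil\log_p k_{i+1}\rceil = m^*$ and $k_{i+1}\leq(U_i-1)/p+1 < U_i$, giving (a). For (b), the unique integer in $[0,p^{m^*})$ congruent to $\kappa(0)$ modulo $p^{m^*}$ is $\kappa(0)^{\{m^*\}} = a_{n^*}p^{m^*-1} + \kappa(0)^{\{m^*-1\}}$, and I claim it exceeds $k_{i+1}=p^{m^*-1}+1$: either $a_{n^*}\geq 2$ directly forces $\kappa(0)^{\{m^*\}}\geq 2p^{m^*-1}$, or $a_{n^*}=1$ and then the growth hypothesis guarantees $\kappa(0)^{\{m^*-1\}}\geq 2$ once $m^*$ is large. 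The remaining residues $\kappa(0)^{\{m^*\}}\pm p^{m^*}$ fall outside $[k_i,k_{i+1}]$ for $m^*\geq 2$, so (b) holds and the induction proceeds. The principal obstacle is the passage from the asymptotic growth of $\kappa(0)^{\{m\}}/m$ to the existence of a nonzero $p$-adic digit of $\kappa(0)$ in the window $[m_{\min}+1, m_{\max}-1]$ --- precisely where the condition $(*)$ bites.
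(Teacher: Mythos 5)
Your proof is correct in all three cases, but in the central case~(2) your route is genuinely different from the paper's. The paper defines $\{k_i\}$ non-inductively: it lets $m_0<m_1<\cdots$ be the indices (past a threshold $m'$) where the truncation $\kappa(0)^{\{m\}}$ jumps, and sets $k_i=\kappa(0)^{\{m_i\}}$; condition~(b) then holds essentially for free because the only integer in $[0,p^{m_{i+1}})$ congruent to $\kappa(0)$ mod $p^{m_{i+1}}$ is $k_{i+1}$ itself, while~(a) drops out of the explicit lower bound $\kappa(0)^{\{m\}}\geq(c_0+1)m$ for $m\geq m'$ extracted from condition $(*)$. You instead build $k_{i+1}$ from $k_i$ one step at a time: you look for a nonzero $p$-adic digit $a_{n^*}$ of $\kappa(0)$ in the window $(m_{\min},m_{\max})$, prove it exists by contradiction from $\kappa(0)^{\{m\}}/m\to\infty$ (if the window's digits were all zero, $\kappa(0)^{\{m_{\max}\}}/m_{\max}$ would stay bounded near $p^2c_0$), and then set $k_{i+1}=p^{n^*}+1$; condition~(b) requires the extra step of checking $\kappa(0)^{\{m^*\}}>k_{i+1}$, which you handle by splitting on $a_{n^*}=1$ versus $a_{n^*}\geq 2$. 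Both arguments ultimately lean on exactly the same input from $(*)$, but the paper's choice of $k_i$ as successive truncation values makes~(b) automatic and~(a) a one-line estimate, whereas your induction makes~(a) automatic (by the choice $k_{i+1}<U_i$) and shifts the work onto~(b) and onto locating a nonzero digit. The paper's proof is therefore shorter; yours is more self-contained in the sense that it never needs to observe the identity $m_i=\lceil\log_p k_i\rceil$ or reason about the jump set globally, at the cost of a ``for $k_i$ large enough'' quantifier that you should make explicit when you start the recursion (several of your inequalities --- the existence of $n^*$, the bound $\kappa(0)^{\{m^*-1\}}\geq 2$ when $a_{n^*}=1$, and the base case of~(a) --- each impose a lower bound on $k_0$, and these should be collected into one threshold at the outset).
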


\begin{proof}
Note that for $m$ sufficiently large, say $m\geq M_0$, we have $p^{m}-1> c_0(m+3)$.

If $\kappa(0)\not\in\Zp$, then for sufficiently large $m$, there is no integer congruent to $\kappa(0)$ modulo $p^m$. So one can just take $k_1$ to be sufficiently large and $k_{i+1}=k_i+1$.

If $\kappa(0)\in\mathbb{N}$, we take a positive integer $m\geq M_0$ such that $p^m>\kappa(0)$, and we set $k_i=p^{m+i}$.

Now we assume $\kappa(0)\in\Zp \setminus \mathbb{N}$. Since $\kappa(0)$ satisfies $(*)$, there exists some integer $m'\geq \max\{N,3c_0+1\}$ such that $\kappa(0)^{\{m\}}\geq (c_0+1)m$ for any $m\geq m'$. Let $m_0<m_1<m_2<\cdots$ be all the $m\geq m'$ where $\kappa(0)^{\{m\}}$ jumps (i.e. all the $m\geq m'$ such that $\kappa(0)^{\{m\}}\neq \kappa(0)^{\{m-1\}}$). Set $k_i=\kappa(0)^{\{m_i\}}$.  Then, clearly, $m_i=\lceil\log_p k_i\rceil$ and there is no integer in $[k_{i-1}, k_i-1]$ congruent to $\kappa(0)$ modulo $p^{m_i}$. It remains to show $k_i-1> c_0(2+\log_p k_{i+1})$. Note that $\kappa(0)^{\{m_{i+1}-1\}}=\kappa(0)^{\{m_i\}}=k_i$. It follows
\[
k_i\geq (c_0+1)(m_{i+1}-1)> c_0m_{i+1}+m'-c_0\geq c_0m_{i+1}+2c_0+1=c_0(2+\lceil \log_p k_{i+1}\rceil)+1,
\]
yielding that $\{k_i\}_{i\in\mathbb{N}}$ satisfies $(a)$.
\end{proof}

We write $M=\lceil\log_p k_1\rceil-1$. By cutting out the small terms, we may assume that $M\geq N$.
%and that there is no $x\in V_M$ such that $\kappa(x)=0$. In the next section, we shall see that if $\kappa(0)$ is not a $p$-adic Liouville number, there is a much simpler construction of $\{k_i\}$.\\
In the rest of this subsection, let $\{k_i\}_{i\in\mathbb{N}}$ be as given in the above Lemma.

\begin{prop}\label{rank1}
The coherent $S_{M}$-modules $(\D_\dif^+(V_M)/(t^{k_i}))^\Gamma$ are all free of rank 1.
\end{prop}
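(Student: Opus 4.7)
The plan is to exploit the fact that the Tate algebra $S_M = L\langle p^{-M}T\rangle$ is a principal ideal domain: any finitely generated torsion-free $S_M$-module is automatically free. It then remains to verify finite generation and torsion-freeness, and to pin down the rank using Proposition \ref{isomLiu}.

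I would first verify that $\mathcal{F}_i := (\D_\dif^+(V_M)/(t^{k_i}))^\Gamma$ is a finitely generated $S_M$-module. By \cite[Proposition 1.4.5]{L12}, for each $n$ the sheaf $(\mathscr{D}_\dif^{+,n}(V_M)/(t^{k_i}))^\Gamma$ is coherent on $M(S_M)$; since for $n$ sufficiently large this coincides with $\mathcal{F}_i$, the latter is also coherent, hence finitely generated. For torsion-freeness, fix such an $n$; the module $\D_\dif^{+,n}(V_M)/(t^{k_i})$ is locally free of finite rank over $S_M \hat\otimes_{\Qp} K_n \cong K_n\langle p^{-M} T\rangle$, which is flat over $S_M$. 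Hence $\D_\dif^{+,n}(V_M)/(t^{k_i})$ is $S_M$-flat, in particular torsion-free, and its submodule $\mathcal{F}_i$ is also torsion-free. The structure theorem for finitely generated modules over a PID now yields $\mathcal{F}_i \cong S_M^r$ for some $r \geq 0$.

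To see $r = 1$, I would pass to the annulus $M(S_{M, n'}) \subset D^*$ where $n' := \lceil \log_p k_{i+1}\rceil$. For any $x \in M(S_{M, n'})$ we have $v_p(T(x)) \leq n'$, and since $M \geq N$ Lemma \ref{lem:alpha-val} gives $v_p(\alpha(x)) \leq c_0 v_p(T(x)) \leq c_0 n'$; hence $\log_p|\alpha^{-1}|_{\sp} \leq c_0 n'$ on this annulus. Since $n' \leq \log_p k_{i+1} + 1$, condition $(a)$ of Lemma \ref{sequence} yields
\[
k_i - 1 > c_0(2 + \log_p k_{i+1}) \geq c_0(1 + n') > c_0 n',
\]
so $k_i > \log_p|\alpha^{-1}|_{\sp}$ on $M(S_{M,n'})$. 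Proposition \ref{isomLiu} then identifies
\[
\mathcal{F}_i \otimes_{S_M} S_{M, n'} \;=\; \bigl(\D_\dif^+(V_{M,n'})/(t^{k_i})\bigr)^\Gamma \;\cong\; \bigl(\D^{\dagger}_\rig(V_{M,n'})\bigr)^{\varphi=\alpha,\,\Gamma=1}
\]
as a locally free $S_{M,n'}$-module of rank $1$. But the left-hand side is the base change of $S_M^r$, which has rank $r$ at every point of $M(S_{M,n'})$, forcing $r = 1$.

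The principal technical step is arranging the inequality $k_i > \log_p|\alpha^{-1}|_{\sp}$ on the annulus $M(S_{M,n'})$, via condition $(a)$ of Lemma \ref{sequence} and Lemma \ref{lem:alpha-val}, so that Proposition \ref{isomLiu} becomes applicable; once this is in place, the rank-$1$ freeness is essentially a PID exercise. Note that condition $(b)$ of Lemma \ref{sequence} plays no role in this proposition—it will only enter in the subsequent assertion that the projections $\mathcal{F}_{i+1} \twoheadrightarrow \mathcal{F}_i$ are isomorphisms.
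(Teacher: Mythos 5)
Your proof is correct and follows essentially the same strategy as the paper: use that $S_M$ is a PID so finitely generated torsion-free modules are free, then pin down the rank by base-changing to an annulus where $k_i > \log_p|\alpha^{-1}|_{\sp}$ so that Proposition~\ref{isomLiu} applies. The only minor difference is the choice of annulus: the paper uses the fixed annulus $M(S_{M,M+1})$ (with the single estimate $k_i \geq c_0(M+1) \geq \log_p|\alpha^{-1}|_{\sp}$ valid uniformly in $i$), whereas you use $M(S_{M,\lceil\log_p k_{i+1}\rceil})$, which varies with $i$ but works equally well.
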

\begin{proof}
Note that $S_M$ is a PID and $(\D_\dif^+(V_M)/(t^{k_i}))^\Gamma$ is a finite torsion free $S$-module; thus it is a finite free $S$-module. On the other hand, we deduce from Lemma \ref{lem:alpha-val} and Lemma \ref{sequence} that
$k_i\geq c_0(M+1)\geq\log_p|\alpha^{-1}|_{\mathrm{sp}}$ on $M(S_{M, M+1})$. Hence $(\D_\dif^+(V_{M,M+1})/(t^{k_i}))^\Gamma$ is a locally free $S_{M,M+1}$-module of rank 1 by Proposition ~\ref{isomLiu}. That is, the base change of $(\D_\dif^+(V_{M})/(t^{k_i}))^\Gamma$ to $S_{M,M+1}$ is locally free of rank 1, yielding that $(\D_\dif^+(V_{M})/(t^{k_i}))^\Gamma$ is a free $S_M$-module of rank 1.
\end{proof}

Remember that we want to extend the crystalline periods to the entire disk. The main idea, however, is first lifting to de Rham periods, which is the inverse limit of all $(\D_\dif^+(V_M)/(t^{k_i}))^\Gamma$. So we need to understand the connection maps
\[
(\D_\dif^+(V_M)/(t^{k_{i+1}}))^\Gamma\ra(\D_\dif^+(V_M)/(t^{k_i}))^\Gamma.
\]

\begin{lemma}\label{isom}
Let $M(R)$ be an affinoid subdomain of the closed unit disk, and let $k$ be a positive integer. If for any $x\in M(R)$, $\kappa(x)$ is not equal to $k$, the natural map
\[(\D_{\dif}^+(V_R)/(t^{k+1}))^{\Gamma}\rightarrow (\D_{\dif}^+(V_R)/(t^k))^{\Gamma}\]
is an isomorphism.
\end{lemma}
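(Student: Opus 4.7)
The strategy is to analyze the short exact sequence of $\Gamma$-equivariant sheaves
\[
0\longrightarrow t^{k}\mathscr{D}_{\dif}^{+}(V_{R})/t^{k+1}\mathscr{D}_{\dif}^{+}(V_{R})\longrightarrow\mathscr{D}_{\dif}^{+}(V_{R})/(t^{k+1})\longrightarrow\mathscr{D}_{\dif}^{+}(V_{R})/(t^{k})\longrightarrow 0
\]
obtained from the successive quotients by powers of $t$. Since $\gamma(t)=\chi(\gamma)\,t$, the leftmost term is $\Gamma$-equivariantly isomorphic to the Sen sheaf $\mathscr{D}_{\Sen}(V_{R}):=\mathscr{D}_{\dif}^{+}(V_{R})/(t)$ twisted by the character $\chi^{k}$. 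Taking continuous $\Gamma$-cohomology gives the long exact sequence
\[
0\to\bigl(\mathscr{D}_{\Sen}(V_{R})(\chi^{k})\bigr)^{\Gamma}\to\bigl(\mathscr{D}_{\dif}^{+}(V_{R})/(t^{k+1})\bigr)^{\Gamma}\to\bigl(\mathscr{D}_{\dif}^{+}(V_{R})/(t^{k})\bigr)^{\Gamma}\to H^{1}\bigl(\Gamma,\mathscr{D}_{\Sen}(V_{R})(\chi^{k})\bigr),
\]
reducing the lemma to the vanishing of the two outer terms.

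To handle both vanishings, I would invoke the family version of Sen theory underlying $\cite{L12}$. The Sen operator $\Theta$ on $\mathscr{D}_{\Sen}(V_{R})$ has characteristic polynomial $TQ(T)$ by hypothesis, and twisting by $\chi^{k}$ shifts $\Theta$ by $\pm k$; its characteristic polynomial on the twist is therefore $(T\pm k)\,Q(T\pm k)$, whose value at $T=0$ is $\pm k\cdot Q(\pm k)$. Compatibly with the convention that $Q(j)$ vanishes at $x$ precisely when $-j$ is a Hodge--Tate weight of $V_{x}$, the hypothesis that $\kappa(x)\neq k$ for every $x\in M(R)$ is exactly the statement that $Q(\pm k)$ is a unit in $R$; equivalently, the shifted Sen operator acts invertibly on the twisted Sen module. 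The standard Sen--Tate dictionary then identifies the kernel and cokernel of $\gamma-1$ (for $\gamma$ a topological generator of an open subgroup $\Gamma_{n}\subset\Gamma$) with the kernel and cokernel of the shifted Sen operator; the finite quotient $\Gamma/\Gamma_{n}$ has order coprime to $p$ and so contributes nothing. Invertibility therefore forces both $(\mathscr{D}_{\Sen}(V_{R})(\chi^{k}))^{\Gamma}$ and $H^{1}(\Gamma,\mathscr{D}_{\Sen}(V_{R})(\chi^{k}))$ to vanish, yielding the desired isomorphism.

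The main technical obstacle is justifying the Sen--Tate decompletion in the affinoid-coefficient setting, in particular the vanishing of $H^{1}$: in families one must take care with completed tensor products with $\widehat{K}_{\infty}$ and with the descent of semilinear $\Gamma$-actions to the Sen sheaf before the Sen operator can be brought to bear. All of the necessary pieces are present in the family Sen theory employed in $\cite{L12}$; once they are invoked, the lemma follows formally from the long exact sequence and the invertibility of the shifted Sen operator.
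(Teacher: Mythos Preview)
Your argument is correct and is precisely what underlies \cite[Corollary~1.5.3]{L12}, which is the paper's entire proof: the graded piece $t^{k}\mathscr{D}_{\dif}^{+}/t^{k+1}\mathscr{D}_{\dif}^{+}$ is the Sen module twisted by $\chi^{k}$, and the hypothesis $\kappa(x)\neq k$ (together with $k\neq 0$) makes the shifted Sen operator invertible on $M(R)$, so the family Sen theory of \cite{L12} forces both $H^{0}$ and $H^{1}$ of the twist to vanish. You should resolve the $\pm k$ hedging to match the paper's normalization (the obstruction sits at $\kappa=k$, not $\kappa=-k$), but otherwise your approach is exactly the content of the cited corollary.
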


\begin{proof}
This follows directly from \cite[Corollary 1.5.3]{L12}.
\end{proof}

\begin{prop}\label{prop:connection-map}
For each $i$, the natural map
\[
(\D_\dif^+(V_M)/(t^{k_{i+1}}))^\Gamma\ra(\D_\dif^+(V_M)/(t^{k_i}))^\Gamma
\]
is an isomorphism.
\end{prop}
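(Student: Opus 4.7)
The plan is to cover $M(S_M)$ by two affinoid pieces on which the projection map becomes an isomorphism for two different reasons, and then invoke coherence to glue. This mirrors the dichotomy sketched in the introduction: near the puncture one exploits that $\kappa$ avoids certain integer weights (via Lemma~\ref{isom}), while away from the puncture one exploits that $k_i$ already exceeds the valuation of $\alpha^{-1}$ (via Proposition~\ref{isomLiu}).

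Concretely, fix $i$. First I would choose an auxiliary integer $M''>M$ satisfying simultaneously
\begin{enumerate}
\item[(i)] $|\kappa(x)-\kappa(0)|<p^{-\lceil\log_p k_{i+1}\rceil}$ for every $x\in M(S_{M''})$, and
\item[(ii)] $c_0 M''<k_i$.
\end{enumerate}
Since $\kappa$ is a bounded rigid function on $D$, condition (i) is met as soon as $M''$ exceeds $\lceil\log_p k_{i+1}\rceil$ by a constant depending only on $\|\kappa\|$, and condition (a) of Lemma~\ref{sequence} guarantees that $\lceil\log_p k_{i+1}\rceil$ is well below $k_i/c_0$, so both constraints can be met simultaneously. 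I would then use the admissible affinoid cover $M(S_M)=M(S_{M''})\cup M(S_{M,M''})$; since both source and target are coherent $\mathcal{O}$-modules, it suffices to check the isomorphism after restriction to each piece.

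On the inner disk $M(S_{M''})$, condition (i) combined with condition (b) of Lemma~\ref{sequence} forces $v_p(\kappa(x)-k)=v_p(\kappa(0)-k)<\lceil\log_p k_{i+1}\rceil$ for every $x\in M(S_{M''})$ and every integer $k\in[k_i,k_{i+1}-1]$; in particular $\kappa(x)\neq k$. Iterating Lemma~\ref{isom} across $k=k_i,\dots,k_{i+1}-1$ then yields the desired isomorphism on the inner disk. On the outer annulus $M(S_{M,M''})$, Lemma~\ref{lem:alpha-val} together with (ii) gives $\log_p|\alpha^{-1}|_{\sp}\leq c_0 M''<k_i<k_{i+1}$, so Proposition~\ref{isomLiu} canonically identifies both $(\D_\dif^+(V_{M,M''})/(t^{k_i}))^\Gamma$ and $(\D_\dif^+(V_{M,M''})/(t^{k_{i+1}}))^\Gamma$ with $(\D^\dag_\rig(V_{M,M''}))^{\varphi=\alpha,\Gamma=1}$, under which the projection becomes the identity map.

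The main obstacle is the simultaneous satisfiability of (i) and (ii): one needs an $M''$ comparable to $\log_p k_{i+1}$ but strictly smaller than $k_i/c_0$. This is exactly what condition (a) of Lemma~\ref{sequence} was tailored to permit, and without that numerical buffer the two local arguments would not overlap. Once the cover is in place, the glueing is immediate from the coherence of the sheaves involved.
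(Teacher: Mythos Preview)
Your proposal is correct and follows essentially the same approach as the paper: cover $M(S_M)$ by an inner disk (where condition~(b) of Lemma~\ref{sequence} together with Lemma~\ref{isom} kills the obstruction) and an outer annulus (where Proposition~\ref{isomLiu} identifies both sides with $(\mathscr{D}^\dag_\rig)^{\varphi=\alpha,\Gamma=1}$), with condition~(a) of Lemma~\ref{sequence} guaranteeing the two regions overlap. The paper uses the explicit radii $\lceil\log_p k_{i+1}\rceil$ for the inner disk and $\lfloor(k_i-1)/c_0\rfloor$ for the outer annulus rather than a single auxiliary cutoff $M''$, and it tacitly treats $\kappa$ as integral so that no extra constant is needed in your condition~(i); otherwise the arguments coincide.
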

\begin{proof}
Note that $\lfloor (k_i-1)/c_0\rfloor> \lceil \log_p k_{i+1}\rceil$. Therefore it suffices to show that both
\begin{equation}\label{eq:map-1}
(\D_\dif^+(V_{M,\lfloor (k_i-1)/c_0\rfloor})/(t^{k_{i+1}}))^\Gamma\ra(\D_\dif^+(V_{M,\lfloor (k_i-1)/c_0\rfloor})/(t^{k_i}))^\Gamma
\end{equation}
and
\begin{equation}\label{eq:map-2}
(\D_\dif^+(V_{\lceil\log_p k_{i+1}\rceil})/(t^{k_{i+1}}))^\Gamma\ra(\D_\dif^+(V_{\lceil\log_p k_{i+1}\rceil})/(t^{k_i}))^\Gamma
\end{equation}
are isomorphisms. For $x\in M(S_{M,\lfloor (k_i-1)/c_0\rfloor})$, we have
\[
k_{i+1}>k_i>c_0\lfloor (k_i-1)/c_0\rfloor\geq v_p(\alpha(x)).
\]
Hence both sides of (\ref{eq:map-1}) are naturally isomorphic to $(\D_\rig^\dag(V_{M,\lfloor(k_i-1)/c_0\rfloor}))^{\varphi=\alpha,\Gamma=1}$, yielding that (\ref{eq:map-1}) is an isomorphism.

Note that $\kappa(x)$ is congruent to $\kappa(0)$ modulo $p^{\lceil\log_p k_{i+1}\rceil}$ for any $x\in M(S_{\lceil\log_p k_{i+1}\rceil})$. Thus we deduce from the construction of $\{k_i\}_{i\in\mathbb{N}}$ that there is no $x\in M(S_{\lceil\log_p k_{i+1}\rceil}) $ such that $\kappa(x)$ belong to $\{k_i,\cdots, k_{i+1}-1\}$. Hence (\ref{eq:map-2})
is an isomorphism by the previous lemma.

%By the arguments in the previous proposition, we see that the cokernel of the given map supports at the points $x\in V_{\lfloor\log_p k_i\rfloor, \lfloor\log_p k_{i+1}\rfloor}$ where the natural map
%\[
%(\D_\rig^\dagger(V_{\lfloor\log_p k_i\rfloor, \lfloor\log_p k_{i+1}\rfloor}))^{\varphi=\alpha,\Gamma=1}\rightarrow (\D_\dif^+(V_x)/(t^{k_i}))^{\Gamma}
%\]
%is the zero map. This only happens for non-saturated points $x$. Indeed, for such $x$, we do have that the desired map is zero by \cite[Theorem 5.4.3]{L12}.
\end{proof}

Now we are ready to prove Theorem \ref{main}. Since the eigencurve $\mathcal{C}$ is the Zariski closure of the set of classical points in $\Sp(\widehat{R}[1/p])\times \mathbb{G}_m$, it follows that $V_0$ lies in the Zariski closure of the set of classical points in $\Sp(\widehat{R}[1/p])$. Hence it is promodular in the sense of \cite{E11}. By [Corollary 1.2.2, \emph{loc.cit.,} ], it suffices to show that $V_0$ has nonzero crystalline periods with Frobenius eigenvalue $\alpha(0)$.
We first deduce from Proposition \ref{rank1} and Proposition \ref{prop:connection-map} that
$(\D_\dif^+(V_M))^\Gamma=\limproj(\D_{\dif}^{+}(V_M)/(t^{k_i}))^{\Gamma}$ is a free $S_M$-module of rank 1.
%Note that $\kappa(0)\notin \mathbb{N}$. It follows from
%\cite[Corollary 1.5.6]{L12} that the natural map
%\[
%(\D_{\dif}^{+}(V_M)/(t^k))^{\Gamma}\otimes k(0)\rightarrow (\D_{\dif}^{+}(V_0)/(t^k))^{\Gamma}
%\]
%is an isomorphism. Therefore
%\begin{equation}\label{eq:deRham-puncture}
%(\D_{\dif}^{+}(V_M))^{\Gamma}\otimes k(0)\rightarrow (\D_{\dif}^{+}(V_0))^{\Gamma}
%\end{equation}
%is also an isomorphism. Note that the right hand side can not be two dimensional, otherwise $V_0$ would be
%de Rham with nonnegative Hodge-Tate weights, yielding that $\kappa(0)\in\mathbb{N}$. Hence $(\D_{\dif}^{+}(V_0))^{\Gamma}$ is 1-dimensional and
%$(\D_{\dif}^{+}(V_M))^{\Gamma}\otimes k(0)$ is locally free of rank 1 around the origin. We may assume that
%$(\D_{\dif}^{+}(V_M))^{\Gamma}\otimes k(0)$ is free of rank 1 on $M(S_M)$ by enlarging $M$.

Using Proposition ~\ref{isomLiu},
we get that the natural map
\[
\D_\rig^\dag(V_{M,M+1})^{\varphi=\alpha,\Gamma=1}\ra(\D_{\dif}^{+}(V_{M,M+1}))^{\Gamma}
\]
is an isomorphism. Hence $\D_{\mathrm{dR}}^{+}(V_{M,M+1})=\D^+_{\mathrm{crys}}(V_{M,M+1})^{\varphi=\alpha}$.
It follows that
\[
(\D_{\dif}^{+}(V_M))^{\Gamma}=\D^+_{\mathrm{dR}}(V_M)\subset V_M\widehat{\otimes}\bdR^+\cap V_{M,M+1}\widehat{\otimes}\bcris^+=V_M\widehat{\otimes}\bcris^+.
\]
The last equality follows from the lemma in the appendix. This implies that
\[
\D^+_{\mathrm{crys}}(V_M)^{\varphi=\alpha}=\D^+_{\mathrm{crys}}(V_M)=\D^+_{\mathrm{dR}}(V_M)
\]
is a free $S_M$-module of rank 1.

Finally, we choose a generator $a$ of $\D^+_{\mathrm{crys}}(V_M)^{\varphi=\alpha}$.
Note that for suitable $n\in\mathbb{N}$, the image of $T^{-n}a$ in $\D^+_{\mathrm{crys}}(V_0)$ is nonzero.
This yields a nonzero crystalline period of $V_0$ with Frobenius eigenvalue $\alpha(0)$, concluding the proof.
%Indeed, Let $s$ be a generator of $\D^+_{\cris}(V_M)$ over $S_M$.
%By (\ref{eq:deRham-puncture}), we have that the specialization map
%\[\D^+_{\cris}(V_M)\otimes S_M/\mathfrak{m}_0\rightarrow \D^+_{\cris}(V_0) \]
%is an isomorphism. Thus $\alpha(0)$ is an Frobenius eigenvalue of $V_0$.
%However, the Frobenius action $\varphi$ is always injective. Therefore, $\alpha(0)$ cannot be zero.

%\begin{remark}
%An immediate corollary is that the bad phenomenon in Proposition~\ref{saturated} will not happen. Namely, if $\kappa(0)$ satisfies $(*)$, there can be at most finitely many non-saturated points in $S_M$ provided $M$ is sufficiently large.
%\end{remark}

\bigskip
\bigskip
%%%%%%%%%%%%%%%%%%%
%%%%%                                %%%%%
%%%%%      Appendix         %%%%%
%%%%%                                %%%%%
%%%%%%%%%%%%%%%%%%%

\section*{Appendix: A technical lemma}
The main purpose of this section is to prove the following lemma.
\begin{lemma} \label{keylemma}
We have the following identity inside Fr\'echet algebra $S_{M, M+1}\ho_{\Qp} \bdR^+$.
\[S _M\ho_{\Qp}\bdR^+\cap S_{M, M+1}\ho_{\Qp}\bcris^+=S_M\ho_{\Qp}\bcris^+. \]
\end{lemma}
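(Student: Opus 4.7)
The inclusion $\supseteq$ is immediate from the two embeddings $S_M\hookrightarrow S_{M,M+1}$ and $\bcris^+\hookrightarrow\bdR^+$. For the nontrivial direction $\subseteq$, my plan is to use uniqueness of the Laurent expansion in the variable $T$ inside each of the three relevant completed tensor products, and then to match coefficients.

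The key observation is the identification
\[
S_{M,M+1}\ho_{\Qp}\bdR^+ \;=\; \varprojlim_{k}\, S_{M,M+1}\ho_{\Qp}(\bdR^+/(t^k)),
\]
where each factor is a Tate-style Laurent algebra in $T$ over the $\Qp$-Banach space $\bdR^+/(t^k)$; in such a Banach Laurent algebra, the coefficient of each $T^n$ is uniquely determined by the element. Passing to the inverse limit, every $x$ in the Fr\'echet algebra admits a unique formal expansion $x=\sum_{n\in\ZZ} b_n T^n$ with $b_n\in\bdR^+$. Now suppose $x$ lies in the claimed intersection. The hypothesis $x\in S_M\ho\bdR^+$ forces $b_n=0$ for $n<0$, visible already on each truncation mod $t^k$. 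The hypothesis $x\in S_{M,M+1}\ho\bcris^+$ produces an analogous expansion $\sum c_n T^n$ with $c_n\in\bcris^+$; matching coefficients modulo $t^k$ for every $k$ forces $b_n=c_n$ inside $\bdR^+$, so each $b_n$ lies in $\bcris^+$.

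It remains to verify the convergence condition $\|b_n\|_{\cris}\cdot |T|^n_{\sp,S_M}\to 0$ required for $x\in S_M\ho\bcris^+$. This is automatic, because the spectral seminorm of $T$ on $M(S_M)$ equals its spectral seminorm on $M(S_{M,M+1})$: both are controlled by the common constraint $|p^{-M}T|\leq 1$, with $M(S_M)$ the full disk and $M(S_{M,M+1})$ the annulus sharing the same outer radius. Hence the positive-exponent decay already guaranteed by $x\in S_{M,M+1}\ho\bcris^+$ is exactly what is needed. The main delicate point of the argument is the uniqueness of Laurent expansion with a Fr\'echet-valued coefficient ring; the inverse-limit description above is what lets me reduce to the standard Banach situation for $\bdR^+/(t^k)$, where completed tensor products behave transparently. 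The rest is just bookkeeping with spectral radii.
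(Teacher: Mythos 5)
Your proof is correct and follows essentially the same route as the paper: identify $S_{M,M+1}\ho_{\Qp}\bdR^+$ and its subalgebras with Laurent/power-series rings via the inverse-limit description of $\bdR^+$, match coefficients using the injectivity of $\bcris^+\hookrightarrow\bdR^+$, and note that the positive-direction convergence condition for the annulus and the disk coincide because they share the same outer radius. The paper simply packages these observations into a short chain of preparatory lemmas before taking the intersection.
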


First let us introduce some notations.

\begin{defn} Let $A$ be a Banach algebra over $\Qp$.
\begin{enumerate}
\item[(i)] For any $n>0$, define the Banach algebra $A\langle p^{-n}T\rangle$ to be the ring of  formal power series $\sum_{i\geq 0} a_iT^i$ with $a_i\in A$ and such that $|a_i|p^{-ni}\rightarrow 0$ as $i\rightarrow \infty$. It is equipped with a Banach norm $|\sum_{i\geq 0}a_iT^i|=\sup |a_i|p^{-ni}$.
\item[(ii)] For any $n'>n>0$, define the Banach algebra $A\langle p^{-n}T, p^{n'}T^{-1}\rangle$ to be the ring of double ended formal series $\sum_{i\in \ZZ} a_iT^i$ with $a_i\in A$ and such that $|a_i|p^{-ni}\rightarrow 0$ as $i\rightarrow\infty$ and $|a_i|p^{-n'i}\rightarrow 0$ as $i\rightarrow -\infty$. It is equipped with a Banach norm $|\sum_{i\in \ZZ}a_iT^i|=\max\{\sup|a_ip^{-ni}|, \sup|a_ip^{-n'i}|\}$.
\end{enumerate}
\end{defn}

\begin{defn}
Let $A=\varprojlim A_i$ be a Fr\'echet algebra where $A_i$'s are $\Qp$-Banach algebras. Define the Fr\'echet algebra $A\langle p^{-n}T\rangle$ to be the inverse limit of Banach algebras $A_i\langle p^{-n}T\rangle$. In particular, $A\langle p^{-n}T\rangle$ can be naturally identified with a subset of $A[[T]]$.

Similarly, define the Fr\'echet algebra $A\langle p^{-n}T, p^{n'}T^{-1}\rangle$ to be the inverse limit of Banach spaces $A_i\langle p^{-n}T, p^{n'}T^{-1}\rangle$. It can be naturally identified with a subset of $A[[T, T^{-1}]]$.
\end{defn}

\begin{lemma}\label{banach}
Let $A$ be a $\Qp$-Banach algebra. For any $n>0$, we have natural identification of Banach algebras
\[\eta_{n, A}: S_n\ho_{\Qp}A\overset{\sim}{\longrightarrow}A\langle p^{-n}T\rangle\]
Similarly, for any $n'>n>0$, we have natural identification of Banach algebras
\[\eta_{n,n',A}:S_{n,n'}\ho_{\Qp}A\overset{\sim}{\longrightarrow}A\langle p^{-n}T, p^{n'}T^{-1}\rangle.\]
Moreover, the following diagram commutes
\[\xymatrix@=45pt{
S_n\ho_{\Qp}A\ar[r]^{\eta_{n,A}} \ar@{^{(}->}[d] & A\langle p^{-n}T\rangle\ar@{^{(}->}[d]\\
S_{n,n'}\ho_{\Qp}A \ar[r]^{\eta_{n,n',A}} & A\langle p^{-n}T, p^{n'}T^{-1} \rangle
}
\]
where the vertical arrows are natural inclusions.
\end{lemma}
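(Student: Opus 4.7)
The plan is to exploit the fact that $S_n$ admits $\{p^{-ni}T^i\}_{i\geq 0}$ as an orthonormal Schauder basis over $L$---by direct computation, each basis element has $S_n$-norm equal to $|p^{-ni}|\cdot p^{-ni}=1$. This reduces the statement to the general principle that if a $\Qp$-Banach space $B$ has an orthonormal Schauder basis $\{e_\alpha\}$, then $B\ho_{\Qp}A$ is computed as the Banach space of $A$-valued null sequences indexed by the basis, with the supremum norm.

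To prove the first identification, I would define $\eta_{n,A}$ on the algebraic tensor product $S_n\otimes_{\Qp}A$ by the natural formula $(\sum_i c_iT^i)\otimes a\mapsto \sum_i(c_ia)T^i$. A straightforward estimate shows $|\eta_{n,A}(f\otimes a)|\leq |f|_{S_n}|a|_A$, so the map extends continuously to $S_n\ho_{\Qp}A$. To construct an inverse, send $\sum_i a_iT^i\in A\langle p^{-n}T\rangle$ to $\sum_i T^i\otimes a_i$; this series converges in $S_n\ho_{\Qp}A$ because the tensor-norms of the summands are bounded by $|T^i|_{S_n}\cdot|a_i|_A=p^{-ni}|a_i|\to 0$. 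One then checks that the two maps are mutually inverse using the uniqueness of the Schauder expansion, and that the norms match (both sides give $\sup_i p^{-ni}|a_i|$), yielding an isometric isomorphism of Banach algebras.

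The argument for $\eta_{n,n',A}$ is identical in spirit, now using the doubly-indexed orthonormal Schauder basis $\{p^{-ni}T^i\}_{i\geq 0}\cup\{p^{-n'i}T^i\}_{i<0}$ of $S_{n,n'}$, with the convergence condition splitting into contributions from the two ends of the Laurent series. Finally, the commutative square is tautological: both horizontal isomorphisms are defined on power series by the same formula, and both vertical inclusions $S_n\hookrightarrow S_{n,n'}$ and $A\langle p^{-n}T\rangle\hookrightarrow A\langle p^{-n}T, p^{n'}T^{-1}\rangle$ are just extensions by zero in negative degrees. The main technical subtlety---mild, but worth pinning down---is reconciling the $L$-structure on $S_n$ with the $\Qp$-structure on the tensor product; this is handled by picking an orthonormal $\Qp$-basis $\{\beta_j\}$ of $L$ and combining it with the $L$-basis above to obtain the orthonormal $\Qp$-Schauder basis $\{\beta_j p^{-ni}T^i\}$ of $S_n$, after which the identification of the completed tensor product with null sequences in $A$ (viewed as $L$-Banach algebra in the applications) is automatic.
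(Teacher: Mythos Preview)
Your argument is correct and is, in fact, far more detailed than the paper's own proof, which reads in its entirety: ``This is obvious.'' The orthonormal-basis description of the completed tensor product that you invoke is exactly the standard mechanism one would use to unpack that word, and your construction of the inverse map together with the norm computation $\sup_i p^{-ni}|a_i|$ is the right way to verify the isometry.

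One remark: you have correctly flagged a genuine imprecision in the statement itself. As written, $S_n=L\langle p^{-n}T\rangle$ carries an $L$-structure while $A$ is only assumed to be a $\Qp$-Banach algebra, so in full generality $S_n\ho_{\Qp}A$ should be identified with $(L\otimes_{\Qp}A)\langle p^{-n}T\rangle$ rather than $A\langle p^{-n}T\rangle$. Your workaround---noting that in the applications $A$ is $\bcris^+$ or $\bdR^+$, which already contain $L$---is exactly how the paper implicitly resolves this, and your device of combining an orthonormal $\Qp$-basis of $L$ with the $\{p^{-ni}T^i\}$ to get a $\Qp$-orthonormal basis of $S_n$ is the clean way to make this precise. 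This is more care than the paper itself takes.
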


\begin{proof}
This is obvious.
\end{proof}

\begin{lemma}\label{frechet}
Let $A=\varprojlim A_i$ be a $\Qp$-Fr\'echet algebra where $A_i$ are $\Qp$-Banach algebras. For any $n>0$, we have natural identification of Fr\'echet algebras
\[\eta_{n, A}: S_n\ho_{\Qp}A\overset{\sim}{\longrightarrow}A\langle p^{-n}T\rangle\]
Similarly, for any $n'>n>0$, we have natural identification of Fr\'echet algebras
\[\eta_{n,n',A}:S_{n,n'}\ho_{\Qp}A\overset{\sim}{\longrightarrow}A\langle p^{-n}T, p^{n'}T^{-1}\rangle.\]
Moreover, the following diagram commutes
\[\xymatrix@=45pt{
S_n\ho_{\Qp}A\ar[r]^{\eta_{n,A}} \ar@{^{(}->}[d] & A\langle p^{-n}T\rangle\ar@{^{(}->}[d]\\
S_{n,n'}\ho_{\Qp}A \ar[r]^{\eta_{n,n',A}} & A\langle p^{-n}T, p^{n'}T^{-1} \rangle
}
\]
where the vertical arrows are natural inclusions.
\end{lemma}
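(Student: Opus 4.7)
The plan is to obtain the Fréchet identification as the inverse limit of the Banach identifications already established in Lemma~\ref{banach}. Concretely, write $A=\varprojlim_i A_i$ where each $A_i$ is a $\Qp$-Banach algebra and the transition maps $A_{i+1}\to A_i$ are continuous $\Qp$-algebra homomorphisms. By the definition given just above the statement, one has $A\langle p^{-n}T\rangle=\varprojlim_i A_i\langle p^{-n}T\rangle$ as Fréchet algebras. The first task is to realize the left-hand side in the same way: I would show that the completed tensor product commutes with the inverse limit in the sense that
\[
S_n\ho_{\Qp}A \;=\; \varprojlim_i\bigl(S_n\ho_{\Qp}A_i\bigr),
\]
and similarly for $S_{n,n'}$. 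This is where the essential analytic content sits.

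To justify this commutation, I would use that $S_n=L\langle p^{-n}T\rangle$ is an orthonormalizable $\Qp$-Banach space with orthonormal basis $\{p^{-ni}T^i\}_{i\geq 0}$ (and $S_{n,n'}$ is orthonormalizable with basis indexed by $\ZZ$, using the two norms on positive and negative powers of $T$). For an orthonormalizable Banach space $B$ with ON basis $\{e_j\}$ and any $\Qp$-Banach algebra $C$, one has the explicit description $B\ho_{\Qp}C=\{\sum_j c_j e_j:\|c_j\|\to 0\}$; this is exactly what Lemma~\ref{banach} encodes for $C=A_i$. Taking inverse limits of these descriptions along $i$ and comparing with the Fréchet definition of the completed tensor product (as the completion of the algebraic tensor product for the family of seminorms pulled back from each $A_i$) gives the desired identification of $S_n\ho_{\Qp}A$ with $\varprojlim_i(S_n\ho_{\Qp}A_i)$.

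Granting that, the rest is formal. For each $i$, Lemma~\ref{banach} supplies an isometric isomorphism $\eta_{n,A_i}:S_n\ho_{\Qp}A_i\xrightarrow{\sim}A_i\langle p^{-n}T\rangle$, and by naturality of the construction (the map is given by $a\otimes T^k\mapsto aT^k$), these isomorphisms commute with the transition maps induced by $A_{i+1}\to A_i$ on both sides. Passing to the inverse limit produces the continuous isomorphism $\eta_{n,A}$ of Fréchet algebras; the same argument, with the two-sided Laurent expansions, produces $\eta_{n,n',A}$. The commutativity of the stated square reduces to the commutativity of the analogous Banach square for each index $i$ (Lemma~\ref{banach}), which survives the inverse limit.

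The main obstacle will be the first step: verifying cleanly that the completed tensor product with the orthonormalizable Banach algebra $S_n$ (resp.\ $S_{n,n'}$) commutes with the inverse limit defining the Fréchet structure on $A$. Once this identification $S_n\ho_{\Qp}A=\varprojlim_i(S_n\ho_{\Qp}A_i)$ is in place, the remainder of the argument is a formal passage to the limit from the Banach case.
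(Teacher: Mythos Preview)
Your proposal is correct and follows the same approach as the paper: apply Lemma~\ref{banach} to each Banach algebra $A_i$ and pass to the inverse limit, using that both $S_n\ho_{\Qp}A$ and $A\langle p^{-n}T\rangle$ are defined as the corresponding inverse limits. The paper dispatches this in a single sentence, whereas you have spelled out the one genuine analytic point (that the completed tensor product with the orthonormalizable $S_n$ commutes with $\varprojlim_i$), but the strategy is identical.
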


\begin{proof}
Apply the result in the previous lemma to the $\Qp$-Banach algebras $A_i$ and then take inverse limits.
\end{proof}

In particular, Lemma~\ref{banach} applies to $A=\bcris^+$ and Lemma~\ref{frechet} applies to $A=\bdR^+=\varprojlim \bdR^+/(t^i)$. To prove Lemma~\ref{keylemma}, it remains to prove the following result.

\begin{lemma}\label{lastlemma}
\begin{enumerate}
\item[(i)] For any $M>0$, the natural continuous maps $\bcris^+\rightarrow \bdR^+/(t^i)$ induces a natural inclusion $\bcris^+\langle p^{-M}T\rangle \hookrightarrow \bdR^+\langle p^{-M}T\rangle$.
\item[(ii)] Similarly, for any $M>0$, the natural continuous maps $\bcris^+\rightarrow \bdR^+/(t^i)$ induces a natural inclusion $\bcris^+\langle p^{-M}T, p^{M+1}T^{-1}\rangle \hookrightarrow \bdR^+\langle p^{-M}T, p^{M+1}T^{-1}\rangle$.
\item[(iii)] We have the following commutative diagram

\[\xymatrix@C=90pt@R=50pt{
S_{M, M+1}\ho_{\Qp}\bcris^+\ar[r]^{\eta_{M, M+1, \bcris^+}}\ar[d]^{(\ast)} & \bcris^+\langle p^{-M}T, p^{M+1}T^{-1} \rangle\ar@{^{(}->}[d]\\
S_{M, M+1}\ho_{\Qp}\bdR^+\ar[r]^{\eta_{M, M+1, \bdR^+}} & \bdR^+\langle p^{-M}T, p^{M+1}T^{-1}\rangle \\
S_M\ho_{\Qp}\bdR^+ \ar@{^{(}->}[u] \ar[r]^{\eta_{M, \bdR^+}}& \bdR^+\langle p^{-M}T\rangle \ar@{^{(}->}[u]
}
\]

In particular, the natural map $(\ast)$ is injective.
\end{enumerate}
\end{lemma}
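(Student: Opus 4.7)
The plan is to deduce all three parts from two basic ingredients: the injectivity of the natural map $\bcris^+\hookrightarrow \bdR^+$, and the continuity of the composite $\bcris^+\to \bdR^+\twoheadrightarrow \bdR^+/(t^k)$ for every $k\geq 1$. The first fact lets me identify coefficients unambiguously, while the second provides, for each $k$, a constant $C_k>0$ with $\|\bar a\|_{\bdR^+/(t^k)}\leq C_k\|a\|_{\bcris^+}$ for all $a\in\bcris^+$.

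For part (i), a typical element $\sum_{i\geq 0}a_iT^i$ of $\bcris^+\langle p^{-M}T\rangle$ satisfies $\|a_i\|_{\bcris^+}p^{-Mi}\to 0$ as $i\to\infty$. The continuity estimate gives $\|\bar a_i\|_{\bdR^+/(t^k)}p^{-Mi}\leq C_k\|a_i\|_{\bcris^+}p^{-Mi}\to 0$ for every $k$, so the same series defines an element of $\bdR^+/(t^k)\langle p^{-M}T\rangle$ for each $k$, and hence of their projective limit $\bdR^+\langle p^{-M}T\rangle$. The resulting assignment is clearly a ring homomorphism, and is injective because any series whose coefficients vanish in $\bdR^+$ has coefficients vanishing in $\bcris^+$, by injectivity of $\bcris^+\hookrightarrow\bdR^+$. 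Part (ii) proceeds identically, with the single convergence condition at $+\infty$ replaced by the pair of conditions at $\pm\infty$; both are preserved under the coefficient-wise estimate.

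For part (iii), the two squares commute by naturality of the isomorphisms $\eta$. Indeed, both $\eta_{M,\bullet}$ and $\eta_{M,M+1,\bullet}$ are characterized as the unique continuous ring maps sending $T^i\otimes b\mapsto bT^i$ for all admissible $i$, so the two compositions in either square agree on this generating set and hence everywhere by continuity. The bottom square in particular is the compatibility already recorded in Lemma~\ref{frechet}. Combining the squares, the composition of the left-hand vertical arrows is transported by the $\eta$'s to the composition of the right-hand vertical arrows, which is a composite of two inclusions (one from part (ii), the other the obvious inclusion of Tate algebras). Since the right-hand composite is injective, so is $(*)$. The only nontrivial external input in this entire argument is the continuity of $\bcris^+\to \bdR^+/(t^k)$, which is classical (the Fréchet topology on $\bdR^+$ is defined so that each $\bdR^+/(t^k)$ is a Banach quotient, and $\bcris^+\to\bdR^+$ is continuous); once this is cited, I do not anticipate any genuine obstacles, and the proof becomes a bookkeeping exercise.
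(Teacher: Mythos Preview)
Your proof is correct and follows essentially the same approach as the paper: both use the continuity of $\bcris^+\to\bdR^+/(t^k)$ to obtain the coefficient-wise map on Tate-style series rings, and the injectivity of $\bcris^+\hookrightarrow\bdR^+$ to conclude injectivity, with (iii) following from naturality. Your write-up is in fact more explicit than the paper's, which simply asserts that the induced map is continuous and that the diagram is ``obviously commutative.''
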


\begin{proof}
\begin{enumerate}
\item[(i)] For each $i$, the map $\bcris^+\rightarrow\bdR^+/(t^i)$ is continuous with respect to the Banach topology on both sides. It is easy to check that the induced map $\bcris^+\langle p^{-M}T\rangle\rightarrow(\bdR^+/(t^i))\langle p^{-M}T\rangle$ is also continuous. This gives a map $\bcris^+\langle p^{-M}T\rangle \hookrightarrow \bdR^+\langle p^{-M}T\rangle$. By identifying $\bdR^+\langle p^{-M}T\rangle$ as a subset of $\bdR^+[[T]]$ and making use of the fact that $\bcris^+\rightarrow\bdR^+$ is injective, we know that the desired map is also injective.
\item[(ii)] The proof is similar to (i).
\item[(iii)] The diagram is obviously commutative.
\end{enumerate}
\end{proof}

Now we can finish the proof of Lemma~\ref{keylemma}.

According to Lemma~\ref{lastlemma} (iii), taking intersection of $S_{M, M+1}\ho_{\Qp}\bcris^+$ and $S_M \ho_{\Qp}\bdR^+$ inside of $S_{M, M+1}\ho_{\Qp}\bdR^+$ is the same as taking the intersection of $\bcris^+\langle p^{-M}T, p^{M+1}T^{-1}\rangle$ and $\bdR^+\langle p^{-M}T\rangle$ inside of $\bdR^+\langle p^{-M}T, p^{M+1}T^{-1}\rangle\subset\bcris^+[[T, T^{-1}]]$. By (i), this intersection is $\bcris^+\langle p^{-M}T\rangle$.

\end{document}